\documentclass[reqno,12pt,letterpaper]{amsart}
\usepackage{amsmath,amssymb,amsthm,graphicx,mathrsfs,url}
\usepackage[usenames,dvipsnames]{color}
\usepackage[colorlinks=true,linkcolor=Red,citecolor=Green]{hyperref}
\usepackage{amsxtra}

\setlength{\marginparwidth}{0.6in}

\def\?[#1]{\textbf{[#1]}\marginpar{\Large{\textbf{??}}}}

\let\epsilon=\varepsilon 

\setlength{\textheight}{8.50in} \setlength{\oddsidemargin}{0.00in}
\setlength{\evensidemargin}{0.00in} \setlength{\textwidth}{6.08in}
\setlength{\topmargin}{0.00in} \setlength{\headheight}{0.18in}
\setlength{\marginparwidth}{1.0in}
\setlength{\abovedisplayskip}{0.2in}
\setlength{\belowdisplayskip}{0.2in}
\setlength{\parskip}{0.05in}

\DeclareGraphicsRule{*}{mps}{*}{}

\newtheorem{thm}{Theorem}
\newtheorem{prop}{Proposition}
\newtheorem{defi}[prop]{Definition}

\newtheorem{lem}[prop]{Lemma}
\newtheorem{corr}[prop]{Corollary}

\numberwithin{equation}{section}

\DeclareMathOperator{\Spec}{Spec}

\DeclareMathOperator{\Op}{Op}

\DeclareMathOperator{\rank}{rank}

\DeclareMathOperator{\supp}{supp}

\title{Exact Control for Schr\"odinger Equation on Torus}
\author{Zhongkai Tao}
\email{tzk320581@berkeley.edu}
\address{Department of Mathematics, University of California, Berkeley, CA 94720, USA}
\address{Department of Mathematics and Statistics, Xi'an Jiaotong University, Xi'an, Shaanxi, China}

\begin{document}

\begin{abstract}
For standard torus $\mathbb{T}^2=\mathbb{R}^2/\mathbb{Z}^2$, we prove observability for free Schr\"odinger equation from a ball of radius $\epsilon$ with explicit dependence of the observability constant on $\epsilon$.
\end{abstract}

\maketitle

\section{Introduction}

We will follow some methods of Bourgain-Burq-Zworski \cite{BBZ}\cite{BuZw} and Jin \cite{Jin} to prove a quantitative version of observability result for the Schr\"odinger equation on the 2-dimensional standard torus.
\begin{thm}[Semiclassical Observability Estimate]\label{thm1}
Let $\mathbb{T}^2=\mathbb{R}^2/\mathbb{Z}^2$, and $\Omega_\epsilon=B(0,\epsilon)\subset\mathbb{T}^2$.
Then for any $\delta>0$, there exists numerical constant $C$ and $h_0=\epsilon^{16+\delta}$ such that for $0<h<h_0, \epsilon\ll 1$,
$$\|u\|_{L^2(\mathbb{T}^2)}\leq C\epsilon^{-4}\|u\|_{L^2(\Omega_\epsilon)}+C\epsilon^{-2}h^{-2}\|(-h^2\Delta-1)u\|_{L^2(\mathbb{T}^2)}.$$
\end{thm}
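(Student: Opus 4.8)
\emph{Plan of proof.} The approach, following Bourgain--Burq--Zworski and Jin, has five steps: (i) remove the part of $u$ on which $-h^2\Delta-1$ is elliptic; (ii) reduce to a function frequency-localized in a thin shell around a circle of radius $R\asymp h^{-1}$; (iii) second-microlocalize this cluster into pieces concentrated near ``resonant'' rational frequency directions, plus a non-resonant remainder; (iv) estimate each resonant piece via an effective one-dimensional problem and the non-resonant piece via equidistribution of geodesics; (v) reassemble, tracking the powers of $\epsilon$ and $h$.

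For (i)--(ii) I would expand $u=\sum_{k\in\mathbb{Z}^2}\widehat u(k)e^{2\pi i k\cdot x}$ and write $u=v+w$ with $v=\sum_{k\in S}\widehat u(k)e^{2\pi i k\cdot x}$, $S=\{k:\ |4\pi^2h^2|k|^2-1|<\rho\}$ and $\rho\asymp\epsilon^{-2}h^{2}$. On $\operatorname{supp}\widehat w$ the multiplier of $-h^2\Delta-1$ has modulus $\ge\rho$, so $\|w\|_{L^2(\mathbb{T}^2)}\le\rho^{-1}\|(-h^2\Delta-1)u\|_{L^2(\mathbb{T}^2)}\lesssim\epsilon^{2}h^{-2}\|(-h^2\Delta-1)u\|_{L^2(\mathbb{T}^2)}$; combined with $\|w\|_{L^2(\Omega_\epsilon)}\le\|w\|_{L^2(\mathbb{T}^2)}$ and with the cluster constant $\epsilon^{-4}$ below, this reproduces precisely the last term of Theorem~\ref{thm1}. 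The shell $S$ is contained in the annulus $\{R-Ch\epsilon^{-2}\le|k|\le R+Ch\epsilon^{-2}\}$ with $R\asymp h^{-1}$, and since $h<h_0=\epsilon^{16+\delta}$ its radial width $Ch\epsilon^{-2}$ is $\ll\epsilon^{-1}$, so on the $\epsilon$-scale $v$ behaves like a single eigenfunction cluster on $\{|k|\approx R\}$. It thus suffices to prove
\[
\|v\|_{L^2(\mathbb{T}^2)}\le C\epsilon^{-4}\|v\|_{L^2(\Omega_\epsilon)}.
\]

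For the cluster estimate I would fix $Q$ with $\epsilon^{-1}\ll Q\ll R$ and call a lattice vector $k$ in the shell \emph{resonant} if $k/|k|$ lies within angular distance $\sim Q^{-1}$ of the direction of some primitive lattice vector of length $\le Q$; there are $O(Q^2)$ resonant directions. Each resonant cone meets the shell in a very short arc of $\{|k|\approx R\}$, and a suitable lattice-point count near a circle of large radius (of Jarn\'ik type, usable because $R$ is a large power of $\epsilon^{-1}$) shows that, after a unimodular change of coordinates, the lattice points there form an arithmetic progression transverse to the resonant direction of bounded length $L$. With the resulting decomposition $v=\sum_\omega v_\omega+v_{\mathrm{nr}}$, the adapted change of variables on $\mathbb{T}^2$ (mapping $\Omega_\epsilon$ onto a possibly eccentric ellipse whose transverse extent is still $\gtrsim\epsilon^{O(1)}$) turns each $v_\omega$ into a function essentially of the form $e^{2\pi i\xi_0\cdot y}\,g(y_2)$ with $g$ a one-variable trigonometric polynomial of bounded degree $L$; a quantitative Tur\'an/Nazarov lower bound then gives $\|v_\omega\|_{L^2(\mathbb{T}^2)}\le C\epsilon^{-O(1)}\|v_\omega\|_{L^2(\Omega_{C\epsilon})}$, the loss being a \emph{fixed} power of $\epsilon$ precisely because $L$ is bounded. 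Summing over the $O(Q^2)$ resonant cones is legitimate since, for $Q\ll\epsilon R$, the Fourier supports of distinct $v_\omega$ are $\gg\epsilon^{-1}$-separated, hence almost orthogonal on $\epsilon$-balls. For $v_{\mathrm{nr}}$, every contributing direction has denominator $>Q>\epsilon^{-1}$, so the associated geodesics on $\mathbb{T}^2$ are $\epsilon$-dense within time $\lesssim Q$; a quantitative Egorov / $TT^*$ argument in the spirit of Jin (where smallness of $h$, i.e.\ largeness of $R$, makes the symbol calculus effective on the time scale $Q$) shows the Schr\"odinger flow spreads $v_{\mathrm{nr}}$ over the whole torus by time $\lesssim Q$, and since $t\mapsto\|e^{it\Delta}v_{\mathrm{nr}}\|_{L^2(\mathbb{T}^2)}$ is constant this yields $\|v_{\mathrm{nr}}\|_{L^2(\mathbb{T}^2)}\le C\epsilon^{-O(1)}\|v_{\mathrm{nr}}\|_{L^2(\Omega_\epsilon)}+(\text{error}\ll\|v\|)$. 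Collecting the resonant and non-resonant estimates and optimizing $Q$ gives the $\epsilon^{-4}$ loss.

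The main obstacle is the interplay of arithmetic and bookkeeping in the cluster estimate: one must choose $Q$, the cone opening, and the transverse degree $L$ so that there are only polynomially many resonant cones, each carrying a \emph{bounded} arithmetic progression of frequencies --- so that the Tur\'an loss is polynomial, not exponential, in $R$ --- while the leftover still equidistributes at scale $\epsilon$; the compatibility constraints among $\epsilon^{-1}$, $Q$, $L$ and $R=(2\pi h)^{-1}$ are exactly what pin $h_0$ to $\epsilon^{16+\delta}$ and produce the final constant $\epsilon^{-4}$. The Jarn\'ik-type control of lattice points on very short arcs, which needs $R$ to be a large power of $\epsilon^{-1}$, is the structural heart; the regime of denominators $\approx Q$ and the overlaps between nearby resonant cones are the delicate technical points.
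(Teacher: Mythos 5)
There is a genuine gap, and it is at the heart of your outline: the reduction in steps (i)--(ii) asks you to prove a false statement. To make the elliptic remainder contribute only $C\epsilon^{-2}h^{-2}\|(-h^2\Delta-1)u\|$ after the $\epsilon^{-4}$ loss you are forced to take the shell width $\rho\gtrsim\epsilon^{-2}h^{2}$, and a shell of that width contains bouncing-ball quasimodes along rational directions for which the pure cluster estimate $\|v\|_{L^2(\mathbb{T}^2)}\le C\epsilon^{-4}\|v\|_{L^2(\Omega_\epsilon)}$ fails badly. Concretely, let $m=\lfloor c\epsilon^{-1}\rfloor$ with $c$ small, pick any large integer $N$ and set $h=\bigl(4\pi^2(N^2+\lceil m^2/2\rceil)\bigr)^{-1/2}$ (such $h$ are arbitrarily small, and the theorem must hold for every $h<h_0$); then
$$v(x,y)=e^{2\pi iNx}\bigl(1-e^{2\pi iy}\bigr)^{m}$$
has all its frequencies $(N,j)$, $0\le j\le m$, satisfying $|4\pi^2h^2(N^2+j^2)-1|\le 4\pi^2h^2m^2\lesssim c^2\epsilon^{-2}h^2$, so $v$ lies in your shell $S$; yet $\sup_{B(0,\epsilon)}|v|\le(2\pi\epsilon)^m$ while $\|v\|_{L^2(\mathbb{T}^2)}^2=\binom{2m}{m}\gtrsim 4^m m^{-1/2}$, so $\|v\|_{L^2(\mathbb{T}^2)}/\|v\|_{L^2(\Omega_\epsilon)}$ is exponentially large in $\epsilon^{-1}$, not $O(\epsilon^{-4})$. (Theorem \ref{thm1} itself is not contradicted: for this $v$ one has $\|(-h^2\Delta-1)v\|\gtrsim$ nothing useful to you, rather $\epsilon^{-2}h^{-2}\|(-h^2\Delta-1)v\|\sim\epsilon^{-4}\|v\|$ dominates the right-hand side --- which is exactly the point: the inhomogeneous term cannot be discarded before the rational directions are treated.) The same example destroys your key structural claim in steps (iii)--(iv): a resonant cone meets the shell in $\sim\epsilon^{-1}$ transverse lattice modes, not a progression of bounded length $L$ (the shell is a union of $\sim\epsilon^{-2}$ circles $|k|^2=n$, so Jarn\'ik/Cilleruelo--C\'ordoba bounds per circle cannot give a bounded total), and with $L\sim\epsilon^{-1}$ the Nazarov--Tur\'an loss is $(C/\epsilon)^{c\epsilon^{-1}}$, catastrophically far from polynomial.

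The paper's proof is organized precisely to avoid asking for a pure restriction bound on rational clusters: for each $\epsilon$-rational direction $\eta$ it passes to a covering torus adapted to $\eta$ and applies the one-dimensional inhomogeneous Helmholtz estimate of Burq--Zworski type (Proposition \ref{prop1}), which retains the term $4h^{-4}\|(-h^2\Delta-1)u\|^2$ --- the term that absorbs the quasimode above --- and then uses the quantitative propagation estimate (Proposition \ref{prop8}) only to move the resulting strip into $B(0,\epsilon)$; Nazarov--Tur\'an enters only in the low-frequency part of Theorem \ref{thm2}. Your non-resonant treatment is close in spirit to the paper's irrational estimate, but even there the bookkeeping needs the paper's choices: cone widths $\sim\epsilon/L_\eta$ rather than a uniform $Q^{-1}$ (two resonant directions can be $\sim Q^{-2}$ apart, so uniform cones overlap), control time $O(\epsilon^{-1})$ from the pigeonhole argument (Proposition \ref{prop9}), and your almost-orthogonality claim needs $R/Q^{2}\gg\epsilon^{-1}$, not merely $Q\ll\epsilon R$. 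Any repair of your outline would have to reinstate the $(-h^2\Delta-1)$ term inside the resonant analysis, which in effect brings you back to the paper's Proposition \ref{prop1}.
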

From Theorem \ref{thm1} we deduce the classical version
\begin{thm}\label{thm2}
On the torus $\mathbb{T}^2=\mathbb{R}^2/\mathbb{Z}^2$ we have
$$\|u_0\|_{L^2(\mathbb{T}^2)}^2\leq C_{\Omega_\epsilon}\int_0^{\frac{1}{2\pi}}\|e^{it\Delta}u_0\|_{L^2(\Omega_\epsilon)}^2dt$$
for $\Omega_\epsilon=B(0,\epsilon)$ and $C_{\Omega_\epsilon}=\exp\exp\frac{C\log{\epsilon^{-1}}}{\log\log{\epsilon^{-1}}}$ with some constant $C$ independent of $\epsilon$.
\end{thm}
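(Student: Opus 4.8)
The plan is to reduce Theorem~\ref{thm2} to a uniform observability estimate on the eigenspaces of $-\Delta$, to prove it for the high eigenvalues directly from Theorem~\ref{thm1}, and for the (finitely many) low ones from a sparsity-based polynomial inequality combined with the classical bound on the number of lattice points on a circle.

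First I would exploit the exact periodicity of the flow on the standard torus. The eigenvalues of $-\Delta$ on $\mathbb{T}^2=\mathbb{R}^2/\mathbb{Z}^2$ are $4\pi^2 n$ with $n$ ranging over sums of two squares, so $e^{it\Delta}u_0=\sum_{n\ge0}e^{-4\pi^2 int}\Pi_n u_0$, where $\Pi_n$ is the spectral projector onto $\mathcal{E}_n:=\ker(\Delta+4\pi^2 n)=\mathrm{span}\{e^{2\pi ik\cdot x}:|k|^2=n\}$, and $\{e^{-4\pi^2 int}\}_{n\in\mathbb{Z}}$ is an orthogonal family in $L^2([0,\tfrac{1}{2\pi}])$ (one full period). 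Parseval in $t$ then gives the exact identities
$$\int_0^{1/2\pi}\|e^{it\Delta}u_0\|_{L^2(\Omega_\epsilon)}^2\,dt=\frac{1}{2\pi}\sum_{n\ge0}\|\Pi_n u_0\|_{L^2(\Omega_\epsilon)}^2,\qquad \|u_0\|_{L^2(\mathbb{T}^2)}^2=\sum_{n\ge0}\|\Pi_n u_0\|_{L^2(\mathbb{T}^2)}^2,$$
so it suffices to bound the eigenspace observability constants $K_n:=\sup_{0\ne v\in\mathcal{E}_n}\|v\|_{L^2(\mathbb{T}^2)}^2/\|v\|_{L^2(\Omega_\epsilon)}^2$ uniformly in $n$: then $C_{\Omega_\epsilon}=2\pi\sup_n K_n$ works. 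For $n$ large enough that $h:=(2\pi\sqrt n)^{-1}<h_0=\epsilon^{16+\delta}$, every $v\in\mathcal{E}_n$ has $(-h^2\Delta-1)v=0$, so the last term in Theorem~\ref{thm1} vanishes and $\|v\|_{L^2(\mathbb{T}^2)}\le C\epsilon^{-4}\|v\|_{L^2(\Omega_\epsilon)}$; hence $K_n\le C^2\epsilon^{-8}$ for all such $n$.

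The remaining $n$ satisfy $4\pi^2 n\le h_0^{-2}=\epsilon^{-2(16+\delta)}$, a finite set. For these, $\mathcal{E}_n$ is spanned by the $D_n:=\#\{k\in\mathbb{Z}^2:|k|^2=n\}$ characters with $|k|^2=n$: a trigonometric polynomial with few terms but with frequencies possibly as large as $\sqrt n\sim\epsilon^{-16-\delta}$. Two inputs combine. (i) A Tur\'an--Nazarov-type inequality: any trigonometric polynomial $v$ on $\mathbb{T}^2$ with at most $D$ nonzero Fourier coefficients obeys $\|v\|_{L^2(\mathbb{T}^2)}^2\le(C\epsilon^{-1})^{CD}\|v\|_{L^2(B(0,\epsilon))}^2$, the exponent depending only on the \emph{number} of frequencies and not on their size; one deduces it from the one-dimensional Tur\'an--Nazarov lemma for exponential sums with $D$ terms by replacing $B(0,\epsilon)$ with an inscribed square $I_1\times I_2$ of side $\sim\epsilon$, applying the lemma in $x_1$ to the slices $v(\cdot,x_2)$ and then in $x_2$ to each partial Fourier coefficient in $x_1$. (ii) The classical bound $D_n\le 4d(n)\le 2^{(1+o(1))\log n/\log\log n}$ ($d$ the divisor function), which over the low range gives $D_n\le\exp(C\log\epsilon^{-1}/\log\log\epsilon^{-1})$. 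Combining, $K_n\le(C\epsilon^{-1})^{CD_n}\le\exp(CD_n\log\epsilon^{-1})\le\exp\exp(C'\log\epsilon^{-1}/\log\log\epsilon^{-1})$, the stray factor $\log\epsilon^{-1}$ being absorbed into the double exponential since $(\log\log\epsilon^{-1})^2\ll\log\epsilon^{-1}$.

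Taking $C_{\Omega_\epsilon}=2\pi\sup_n K_n$ and noting that $\epsilon^{-8}$ too is $\le\exp\exp(C\log\epsilon^{-1}/\log\log\epsilon^{-1})$ for $\epsilon\ll1$, one recovers Theorem~\ref{thm2}. I expect the main obstacle to be the low-frequency step, and within it part~(i): the sup-norm one-dimensional Tur\'an--Nazarov lemma (with exponent linear in the number of terms) is classical, but transferring it to an $L^2$ estimate on a two-dimensional ball needs care with the slicing geometry, and one must check that the number-theoretic bound in (ii) is applied with uniformity over the whole low range. The essential point is that the Tur\'an exponent scales with the sparsity $D_n=n^{o(1)}$ rather than with the degree $\sqrt n$; using the degree would yield only the much weaker bound $C_{\Omega_\epsilon}=\exp\exp(C\log\epsilon^{-1})$.
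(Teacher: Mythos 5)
Your proposal is correct, and its low-frequency half (slicing the two-dimensional estimate into two applications of the Nazarov--Tur\'an lemma, one in each variable, combined with the divisor-function bound $D_n\le e^{C\log n/\log\log n}$ over the range $n\lesssim \epsilon^{-2(16+\delta)}$) is essentially identical to the paper's argument; the only quibble there is your phrase ``then in $x_2$ to each partial Fourier coefficient in $x_1$'' --- after restricting $x_1$ to an interval of length $\sim\epsilon$ the characters in $x_1$ are no longer orthogonal, so one should simply apply the one-dimensional lemma to the slices $v(x_1,\cdot)$ for each fixed $x_1$ in that interval (as the paper does), which costs nothing. Where you genuinely diverge is the high-frequency part: the paper (Theorem~\ref{thm5}) follows the Bourgain--Burq--Zworski scheme of \cite{BBZ}, cutting off in time by $\chi(t/T)$, Fourier transforming in $t$, applying Theorem~\ref{thm1} at each dual frequency $\tau>h_0^{-2}$ with $h^{-2}=\tau$, absorbing the commutator term $\tfrac{C\epsilon^{-2}}{T}\|\chi'\|\,\|u_0\|$ by taking $T\sim\epsilon^{-2}$, and only then using the $\tfrac1{2\pi}$-periodicity of the flow to shrink the time window; you instead exploit that every eigenfunction cluster $v\in\ker(\Delta+4\pi^2n)$ satisfies $(-h^2\Delta-1)v=0$ exactly with $h=(2\pi\sqrt n)^{-1}<h_0$, so Theorem~\ref{thm1} gives the eigenspace bound $\|v\|^2\le C\epsilon^{-8}\|v\|^2_{L^2(\Omega_\epsilon)}$ directly, and you recombine the clusters by the exact orthogonality of the harmonics $e^{-4\pi^2int}$ over one period $[0,\tfrac1{2\pi}]$ (a fact the paper itself invokes, but only for the low frequencies). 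Both routes produce the same $\epsilon^{-8}$ contribution, which is then dominated by the double-exponential low-frequency constant. Your version is shorter and avoids the time-Fourier machinery entirely, but it leans on the arithmetic structure of the free spectrum ($\mathrm{Spec}(-\Delta)\subset 4\pi^2\mathbb{Z}$, hence exact periodicity and exact $L^2_t$-orthogonality of distinct clusters), whereas the cutoff-and-Fourier argument of \cite{BBZ} is the one that survives perturbations (e.g.\ potentials) where no such spectral arithmetic is available --- which is precisely why Theorem~\ref{thm1} is stated with the inhomogeneous term and why the paper runs the more robust argument.
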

\subsection{Historical Remark}
The control for Schr\"odinger equation is first shown by Lebeau \cite{Le} under the following geometric control condition
$$\mbox{There exists } T>0 \mbox{ such that every geodesic of length } T \mbox{ intersects } \Omega.$$
In general, the geometric control condition is not necessary. The observability estimate in the case of flat tori is shown by Jaffard \cite{Jaf} and Haraux \cite{Ha} in dimension two and by Komornik \cite{Ko} in higher dimensions. In dimension two, Burq-Zworski \cite{BuZw} extended the result to Schr\"odinger equation with smooth potential. Bourgain-Burq-Zworski \cite{BBZ} further extended it to the case of $L^2$ potential. In higher dimensions, the result is shown by Anantharaman-Maci\`{a} \cite{AnMa} with some class of potentials including continuous ones.

For compact negatively curved surfaces, the observability by any nonempty open set is proved by Dyatlov-Jin-Nonnenmacher \cite{DJN}.

All the above results do not provide an exact constant for torus. However, the observability estimate is proved for any $T>0$. We expect that our exact constant is valid for any $T>0$ but are not able to prove it for some technical reasons.

Theorem \ref{thm1} gives a lower bound on quantum limits on the standard torus. A better bound can be provided by the explicit description of the quantum limits by Jakobson \cite{Jak}.

\noindent\textbf{Acknowledgements.} This note is written based on an undergraduate research project supervised by Professor Semyon Dyatlov at Berkeley in 2019. The author would like to thank him for introducing this topic and a lot of helpful discussions. We would like to thank Nicolas Burq, Aleksandr Logunov, Ping Xi and Maciej Zworski for helpful discussions. The research was supported in part by the National Science Foundation CAREER grant DMS-1749858.
\section{Estimate in dimension one}
In this section we show the following observability estimate for the inhomogeneous Helmholtz equation.
\begin{prop}\label{prop1}
Let $\omega_\epsilon=(-\epsilon,\epsilon)\times [0,1]\subset \mathbb{T}^2=\mathbb{R}^2/\mathbb{Z}^2$, then for any $u\in H^2(\mathbb{T}^2)$ and $h>0$
$$\|u\|_{L^2(\mathbb{T}^2)}^2\leq C\epsilon^{-3}\|u\|_{L^2(\omega_\epsilon)}^2+4h^{-4}\|(-h^2\Delta-1)u\|_{L^2(\mathbb{T}^2)}^2.$$
\end{prop}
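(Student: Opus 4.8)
The plan is to expand $u$ in a Fourier series in the periodic variable that is *not* cut off — call it $y$ — so that for each frequency $n \in \mathbb{Z}$ the coefficient $u_n(x)$ solves a one-dimensional inhomogeneous Helmholtz equation on the circle $\mathbb{T}_x = \mathbb{R}/\mathbb{Z}$. Concretely, writing $u(x,y) = \sum_{n} u_n(x) e^{2\pi i n y}$, the operator $-h^2\Delta - 1$ acts as $-h^2 u_n'' + (4\pi^2 h^2 n^2 - 1) u_n =: -h^2 u_n'' - \mu_n u_n$, where $\mu_n = 1 - 4\pi^2 h^2 n^2$. By Parseval, $\|u\|_{L^2(\mathbb{T}^2)}^2 = \sum_n \|u_n\|_{L^2(\mathbb{T}_x)}^2$, $\|u\|_{L^2(\omega_\epsilon)}^2 = \sum_n \|u_n\|_{L^2(-\epsilon,\epsilon)}^2$, and similarly for the right-hand side. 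So the problem reduces to a \emph{uniform-in-$\mu$} one-dimensional estimate: for $v \in H^2(\mathbb{T}_x)$ and any real $\mu$,
$$\|v\|_{L^2(\mathbb{T}_x)}^2 \leq C\epsilon^{-3}\|v\|_{L^2(-\epsilon,\epsilon)}^2 + 4h^{-4}\|(-h^2 v'' - \mu v)\|_{L^2(\mathbb{T}_x)}^2,$$
with $C$ and the constant $4$ independent of $\mu$, $h$, $\epsilon$. Summing over $n$ then gives the Proposition.

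For the one-dimensional estimate, I would split into two regimes according to the size of $\mu$. When $\mu \leq 0$, or more generally when $\mu$ is bounded away from the positive reals where $-h^2\partial_x^2$ has spectrum, the operator $-h^2 v'' - \mu v$ is elliptic/invertible with a quantitative lower bound, and one gets $\|v\|_{L^2(\mathbb{T}_x)} \lesssim h^{-2}\|(-h^2 v'' - \mu v)\|_{L^2}$ directly with no need for the observation term. The interesting regime is $\mu > 0$ close to an eigenvalue $h^2 \cdot (2\pi k)^2$ of $-h^2\partial_x^2$; there the homogeneous solutions are (roughly) $e^{\pm i\sqrt{\mu}\,x/h}$, and one needs a \emph{quantitative unique continuation / Ingham-type} bound showing that an oscillating exponential on the circle cannot be too small on the interval $(-\epsilon,\epsilon)$ relative to its $L^2$ norm — this is where the $\epsilon^{-3}$ must come from. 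A clean way: use the Fourier series of $v$ in $x$ as well, $v = \sum_k c_k e^{2\pi i k x}$, so that $\|(-h^2 v'' - \mu v)\|^2 = \sum_k |c_k|^2 (h^2 4\pi^2 k^2 - \mu)^2$; the frequencies $k$ with $|4\pi^2 h^2 k^2 - \mu|$ small are the "bad" ones, and there are at most $O(\sqrt{\mu}/h)$-spaced such, in fact at most two consecutive values of $|k|$ can be genuinely resonant. One then shows that a function whose Fourier support is concentrated on one or two frequencies (an "almost pure exponential") has $\|v\|_{L^2(-\epsilon,\epsilon)}^2 \gtrsim \epsilon^3 \|v\|_{L^2(\mathbb{T}_x)}^2$, while the contribution of the non-resonant frequencies is controlled by $h^{-4}\|(-h^2v''-\mu v)\|^2$ after dividing by the gap.

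The main obstacle I expect is getting the \emph{sharp power} $\epsilon^{-3}$ (rather than something worse like $\epsilon^{-4}$ or an exponential loss) in the lower bound for a near-exponential restricted to a short interval, \emph{uniformly over all possible resonant frequencies} — i.e. a quantitative two-frequency Ingham inequality on the circle with explicit $\epsilon$-dependence. The danger is near-cancellation: a combination $a e^{2\pi i k x} + b e^{2\pi i (k+1) x}$ could be small on $(-\epsilon,\epsilon)$ if $a,b$ are tuned, and one must rule this out quantitatively — this is presumably handled by a Remez- or Turán-type inequality for exponential sums, or by directly estimating the Gram matrix of $\{e^{2\pi i k x}, e^{2\pi i(k+1)x}\}$ on $L^2(-\epsilon,\epsilon)$ and showing its smallest eigenvalue is $\gtrsim \epsilon^3$. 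Once that linear-algebra fact is in hand, splitting $v$ into resonant and non-resonant parts, applying the two-frequency bound to the former and the elliptic bound to the latter, and bookkeeping the constants to land exactly at $C\epsilon^{-3}$ and the clean constant $4$ in front of $h^{-4}\|(-h^2\Delta-1)u\|^2$, should be routine.
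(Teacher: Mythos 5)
Your reduction to a uniform-in-$\mu$ one-dimensional estimate via Fourier series in $y$ is exactly the paper's Step 2, and your resonance analysis in $x$ is essentially sound: with the natural non-resonance threshold $|4\pi^2h^2k^2-\mu|\ge h^2/2$, at most one value of $|k|$ can be resonant (two distinct nonnegative $k<k'$ would force $k'^2-k^2<1/(4\pi^2)$), and the smallest Gram eigenvalue of $\{e^{\pm 2\pi ik x}\}$ on $(-\epsilon,\epsilon)$ is indeed $\gtrsim\min(k^2\epsilon^3,\epsilon)\ge c\,\epsilon^3$, which is the correct source of $\epsilon^{-3}$. (Do not enlarge the resonant block to two consecutive values of $|k|$ as you suggest: a four-exponential space $\{\pm k,\pm(k+1)\}$ contains combinations vanishing to second order at $0$, so for moderate $k$ its Gram bound degrades to order $k^6\epsilon^7\ll\epsilon^3$; you must choose the threshold so that only one $|k|$ is resonant.)

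The genuine gap is the recombination step you call routine bookkeeping. Writing $v=Pv+Qv$ (resonant/non-resonant), you get $\|Qv\|_{L^2(\mathbb{T}_x)}\le 2h^{-2}\|f\|$ and $\|Pv\|^2_{L^2(\mathbb{T}_x)}\le C\epsilon^{-3}\|Pv\|^2_{L^2(-\epsilon,\epsilon)}$; but you only observe $v$, not $Pv$, on the window, so $\|Pv\|_{L^2(-\epsilon,\epsilon)}\le\|v\|_{L^2(-\epsilon,\epsilon)}+\|Qv\|_{L^2(-\epsilon,\epsilon)}$, and the contamination $\|Qv\|_{L^2(-\epsilon,\epsilon)}$ gets multiplied by $\epsilon^{-3}$, yielding $C\epsilon^{-3}h^{-4}\|f\|^2$ rather than $4h^{-4}\|f\|^2$. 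An $L^\infty$ bound on $Qv$ (Cauchy--Schwarz against the spectral gaps) improves this only to roughly $C\epsilon^{-2}h^{-4}\|f\|^2$, and in the regime where the resonant frequency is between $1$ and $\epsilon^{-1/2}$ I do not see how to remove the loss along this route. So your plan proves the estimate with $C\epsilon^{-a}h^{-4}$, $a>0$, in place of the stated $\epsilon$-independent constant; that weaker version would still push through the rest of the paper with worse powers of $\epsilon$, but it is not the Proposition as stated, and the clean constant is precisely the non-routine point. The paper gets it by a different mechanism: for $z>0$ it multiplies $v$ by a cutoff $\chi$ vanishing on $B(0,\epsilon/3)$ and solves $(-h^2\partial_x^2-z)(\chi v)=\tilde f$ by the explicit variation-of-parameters formula with kernel $\frac{1}{h\sqrt z}\sin\bigl(\sqrt z(x-t)/h\bigr)$, which is bounded by $h^{-2}$ uniformly in $z$; the forcing then enters only through $h^{-2}\|f\|_{L^1}$ with no $\epsilon$-loss, while the commutator terms are supported in $\{\epsilon/3\le|x|\le\epsilon/2\}$ and carry the $\epsilon^{-3/2}\|v\|_{L^2(-\epsilon,\epsilon)}$ factor. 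If you want to complete your Fourier route, you need an analogous device that keeps the forcing term global instead of letting it enter through the $\epsilon$-window.
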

\begin{proof}
{\em Step 1}

We follow the method in \cite{BuZw2} to prove an estimate in dimension one. Let $\mathbb{T}^1=\mathbb{R}/\mathbb{Z}$, for any $v\in H^2(\mathbb{T}^1)$ and $z\in\mathbb{R}$, we claim
\begin{align}\label{2.1}
    \|v\|_{L^2(\mathbb{T}^1)}^2\leq C\epsilon^{-3}\|v\|_{L^2((-\epsilon,\epsilon))}^2+4h^{-4}\|(-h^2\partial_x^2-z)v\|_{L^2(\mathbb{T}^1)}^2.
\end{align}
Denote $f=(-h^2\partial_x^2-z)v$, we separate the proof into two cases:\\
Case 1: $z\leq 0$. In this case we have
\begin{align*}
    h^2\|\partial_x v\|_{L^2(\mathbb{T}^1)}^2&\leq\int_{\mathbb{T}^1}((-h^2\partial_x^2-z)\bar{v})vdx\\
    &\leq \|f\|_{L^2(\mathbb{T}^1)}\|v\|_{L^2(\mathbb{T}^1)}.
\end{align*}
Then
\begin{align*}
    |v(x)|&\leq \left|\int_t^x \partial_xv(y)dy\right|+|v(t)|\\
    &\leq \|\partial_x v\|_{L^2(\mathbb{T}^1)}+|v(t)|.
\end{align*}
So
\begin{align*}
    \|v\|_{L^2(\mathbb{T}^1)}^2&\leq 2\|\partial_xv\|_{L^2(\mathbb{T}^1)}^2+\epsilon^{-1}\|v\|_{L^2((-\epsilon,\epsilon))}^2\\
    &\leq \frac{2}{h^2}\|v\|_{L^2(\mathbb{T}^1)}\|f\|_{L^2(\mathbb{T}^1)}+\epsilon^{-1}\|v\|_{L^2((-\epsilon,\epsilon))}^2\\
    &\leq \frac{1}{2}\|v\|_{L^2(\mathbb{T}^1)}^2+\frac{2}{h^4}\|f\|_{L^2(\mathbb{T}^1)}^2+\epsilon^{-1}\|v\|_{L^2((-\epsilon,\epsilon))}^2.
\end{align*}
So
\begin{align*}
    \|v\|_{L^2(\mathbb{T}^1)}^2\leq \frac{4}{h^4}\|f\|_{L^2(\mathbb{T}^1)}^2+2\epsilon^{-1}\|v\|_{L^2((-\epsilon,\epsilon))}^2.
\end{align*}
\\Case 2: $z>0$\\
First choose $\chi\in C_0^\infty(\mathbb{T}^1)$ such that $\chi=0$ on $B(0,\frac{\epsilon}{3})$ and $\chi=1$ on $\mathbb{T}^1\setminus B(0,\frac{\epsilon}{2})$ with $|\chi^{(k)}(x)|\leq \frac{C_k}{\epsilon^k}, \forall k\in \mathbb{N}$.
We then have
$$(-h^2\partial_x^2-z)(\chi v)=h^2\partial_x^2\chi v-2h^2\partial_x(\partial_x\chi v)+\chi f=\tilde{f}.$$
The solution of the ODE is
$$\chi(x) v(x)=-\frac{1}{h\sqrt{z}}\int_0^x \sin{\frac{\sqrt{z}(x-t)}{h}}\tilde{f}(t)dt.$$
For each term we have
\begin{align*}
 \left|-\frac{1}{h\sqrt{z}}\int_0^x \sin{\frac{\sqrt{z}(x-t)}{h}}h^2\partial_x^2\chi v(t)dt\right|\leq   \|\partial_x^2\chi v\|_{L^1((-\epsilon,\epsilon))}\leq \frac{C}{\epsilon^\frac{3}{2}}\| v\|_{L^2((-\epsilon,\epsilon))},
\end{align*}
\begin{align*}
      \left|-\frac{1}{h\sqrt{z}}\int_0^x \sin{\frac{\sqrt{z}(x-t)}{h}}h^2\partial_t(\partial_x\chi v)(t)dt\right|&=\left|-\frac{1}{h\sqrt{z}}\int_0^x \partial_t\left(\sin{\frac{\sqrt{z}(x-t)}{h}}\right)h^2\partial_x\chi v(t)dt\right|\\
      &\leq \|\partial_x\chi v(t)\|_{L^1((-\epsilon,\epsilon))}\\
      &\leq \frac{C}{\epsilon^\frac{1}{2}}\|v\|_{L^2((-\epsilon,\epsilon))},
\end{align*}
\begin{align*}
    \left|-\frac{1}{h\sqrt{z}}\int_0^x \sin{\frac{\sqrt{z}(x-t)}{h}}\chi f(t)dt\right|\leq \frac{1}{h^2}\|f\|_{L^1(\mathbb{T}^1)}.
\end{align*}
Put them together we get
\begin{align*}
    \| v\|_{L^2(\mathbb{T}^1)}&\leq \|\chi v\|_{L^2(\mathbb{T}^1)}+\|(1-\chi )v\|_{L^2(\mathbb{T}^1)}\\
    &\leq \frac{C}{\epsilon^\frac{3}{2}}\| v\|_{L^2((-\epsilon,\epsilon))}+\frac{C}{\epsilon^\frac{1}{2}}\|v\|_{L^2((-\epsilon,\epsilon))} +\frac{1}{h^2}\|f\|_{L^1(\mathbb{T}^1)}+\|v\|_{L^2((-\epsilon,\epsilon))}\\
    &\leq \frac{C}{\epsilon^\frac{3}{2}}\| v\|_{L^2((-\epsilon,\epsilon))}+\frac{1}{h^2}\|f\|_{L^2(\mathbb{T}^1)}.
\end{align*}
\\
{\em Step 2}

Let $g=(-h^2\Delta-1)u$, we prove the 2-dimensional estimate by Fourier expansion in $y$.

Decompose $u=\sum\limits_{k\in\mathbb{Z}}u_k(x)e_k(y)$, and $g=\sum\limits_{k\in\mathbb{Z}}g_k(x)e_k(y)$ where $e_k(y)=e^{2k\pi i y}$, then
$$(-h^2\partial_x^2+(2k\pi)^2h^2-1)u_k=g_k.$$
The proof follows from the one-dimension estimate \eqref{2.1}
$$\|u_k\|_{L^2(\mathbb{T}^1)}^2\leq C\epsilon^{-3}\|u_k\|_{L^2(B(0,\epsilon))}^2+4h^{-4}\|g_k\|_{L^2(\mathbb{T}^1)}^2.$$
\end{proof}

\section{Semiclassical preliminaries}
In this section we recall some semiclasscial preliminaries we will use. The general reference is \cite{Zw}. Throughout this section, we take $\mathbb{T}^n=\mathbb{R}^n/(L\mathbb{Z})^n$ for some $L\geq 1$.
\subsection{$L^2$ boundedness of pseudo-differential operators}
We will recall several properties related to $L^2$ boundedness of pseudo-differential operators. First we recall the definition of Weyl quantization.
\begin{defi}
Let $a(x,\xi)\in S^m(T^*\mathbb{T}^n)$, the Weyl quantization is defined as
$$\Op^w_h(a)u(x)=\frac{1}{(2\pi h)^n}\int_{\mathbb{R}^n}\int_{\mathbb{R}^n}a\left(\frac{x+y}{2},\xi\right)e^{i(x-y)\xi/h}u(y)dyd\xi.$$
$\Op^w_h(a)$ is called an $m$-th order pseudo-differential operator.
\end{defi}
$0$-th order pseudodifferential operators are bounded on $L^2(\mathbb{T}^n)$. In fact, we have
\begin{lem}\label{lem3}
If $a\in S^0(T^*\mathbb{T}^n)$, then $\Op^w_h(a):L^2(\mathbb{T}^n)\to L^2(\mathbb{T}^n)$ is bounded with
$$\|\Op^w_h(a)\|\leq C\sum\limits_{|\alpha|\leq Kn}h^{\frac{|\alpha|}{2}}\|\partial^\alpha a\|_{L^\infty}$$
for some universal constant $K$.
\end{lem}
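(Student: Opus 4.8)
The plan is to reduce the statement to the standard Calderón–Vaillancourt theorem on $\mathbb{R}^n$ via a periodization argument, keeping careful track of how the semiclassical parameter $h$ enters the seminorm bound. First I would recall that on $\mathbb{R}^n$ one has the sharp Calderón–Vaillancourt estimate: for $b\in S^0(T^*\mathbb{R}^n)$,
\[
\|\Op^w_h(b)\|_{L^2(\mathbb{R}^n)\to L^2(\mathbb{R}^n)}\leq C_n\sum_{|\alpha|\leq K_n} h^{|\alpha|/2}\|\partial^\alpha b\|_{L^\infty(T^*\mathbb{R}^n)},
\]
which follows from the rescaling $(x,\xi)\mapsto(x/\sqrt h,\sqrt h\,\xi)$ turning $\Op^w_h$ into $\Op^w_1$ of the symbol $b(\sqrt h\, x,\sqrt h\,\xi)$ whose derivatives pick up the factors $h^{|\alpha|/2}$, combined with the $h$-independent Calderón–Vaillancourt bound (the classical proof of which uses a dyadic/almost-orthogonality or Cotlar–Stein decomposition of phase space into unit cubes). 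The number $K=K_n$ of derivatives needed is whatever the classical theorem requires; the statement as written allows $K$ to depend on nothing more than being a universal constant for each fixed $n$.

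Next I would pass from $\mathbb{R}^n$ to $\mathbb{T}^n=\mathbb{R}^n/(L\mathbb{Z})^n$. Given $a\in S^0(T^*\mathbb{T}^n)$, view $a$ as an $(L\mathbb{Z})^n$-periodic symbol on $T^*\mathbb{R}^n$; since $L\geq 1$, all its Euclidean derivative seminorms $\|\partial^\alpha a\|_{L^\infty}$ are unchanged and finite. A function $u\in L^2(\mathbb{T}^n)$ lifts to a periodic distribution, and the key observation is that the Weyl quantization acting on periodic functions agrees with the Euclidean Weyl quantization in the sense that $\Op^w_h(a)u$, computed by the integral formula in the definition, is again $(L\mathbb{Z})^n$-periodic and its restriction to a fundamental domain has the same $L^2$ norm. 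Concretely I would test against $u$ of the form $u=\sum_{k} v(\cdot - Lk)$ with $v$ compactly supported, or equivalently work on the Fourier side where $\Op^w_h(a)$ acts on the lattice $\frac{2\pi}{L}\mathbb{Z}^n$; the cleanest route is to invoke a standard lemma (e.g. as in Zworski's book) that the operator norm of a periodic pseudodifferential operator on $L^2(\mathbb{T}^n)$ is bounded by its operator norm as an operator on $L^2(\mathbb{R}^n)$. Combining this with the Euclidean estimate above gives exactly the claimed bound with the same constant $C$ and the same $K$, uniformly in $L\geq 1$.

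The main obstacle, and the only point requiring genuine care, is the reduction from the torus to $\mathbb{R}^n$: one must check that no loss occurs in the operator norm under periodization, and in particular that the constant does not degrade as $L\to\infty$. This is where the hypothesis $L\geq 1$ is used — it guarantees that the fundamental cells have unit side at least, so the almost-orthogonality decomposition into unit phase-space cubes that underlies Calderón–Vaillancourt descends to the torus without the number of overlapping cells blowing up. Once that reduction is in place, everything else is the classical theorem plus the elementary rescaling bookkeeping that produces the powers $h^{|\alpha|/2}$.
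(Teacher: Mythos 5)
Your proposal is correct and is essentially the paper's own argument: the paper proves Lemma \ref{lem3} simply by invoking the proofs of \cite[Theorem 4.23, Theorem 5.5]{Zw}, i.e., exactly your two steps of the rescaled Calder\'on--Vaillancourt bound on $\mathbb{R}^n$ producing the $h^{|\alpha|/2}$ factors, followed by the transfer to periodic symbols acting on $L^2(\mathbb{T}^n)$. (One minor remark: the transference to the torus is uniform in the lattice for any $L>0$ since the Euclidean estimate never sees $L$, so $L\geq 1$ is part of the setting rather than the reason the constant does not degrade.)
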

\begin{proof}
The proof follows from the proof of \cite[Theorem 4.23, Theorem 5.5]{Zw}.
\end{proof}

Since we will need to estimate $L^2$ bound for remainders in composition formula, we prove an estimate for the composition formula of pseudo-differential operators.
\begin{lem}
Let $A(D)=\frac{1}{2}\langle QD,D\rangle$ with $Q$ a real nonsingular symmetric matrix.
Suppose $a\in S^0(\mathbb{R}^n)$, then
$$\sum\limits_{|\alpha|\leq N}h^{\frac{|\alpha|}{2}}\|\partial^\alpha e^{ihA(D)}a\|_{L^\infty}\leq C\sum\limits_{|\alpha|\leq N+n+1}h^{\frac{|\alpha|}{2}}\|\partial^\alpha a\|_{L^\infty}.$$
\end{lem}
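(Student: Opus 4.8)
The plan is to reduce the estimate to a single $L^\infty\to L^\infty$ bound for the Fourier multiplier $e^{ihA(D)}$ and then prove that bound by a rescaled non-stationary-phase argument. Since $e^{ihA(D)}$ has symbol $e^{\frac{ih}{2}\langle Q\xi,\xi\rangle}$ it is a Fourier multiplier, hence commutes with every $\partial^\alpha$, so $\partial^\alpha e^{ihA(D)}a=e^{ihA(D)}\partial^\alpha a$. Therefore it suffices to show
$$\|e^{ihA(D)}b\|_{L^\infty(\mathbb{R}^n)}\le C\sum_{|\beta|\le n+1}h^{|\beta|/2}\|\partial^\beta b\|_{L^\infty},\qquad b\in S^0(\mathbb{R}^n);$$
indeed, applying this with $b=\partial^\alpha a$, multiplying by $h^{|\alpha|/2}$ and summing over $|\alpha|\le N$ gives, after setting $\gamma=\alpha+\beta$ (so $|\gamma|\le N+n+1$) and bounding the number of pairs $(\alpha,\beta)$ by a constant depending only on $n$ and $N$, exactly the claimed inequality.

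To prove the displayed estimate I would compute the kernel of $e^{ihA(D)}$ by a Fresnel integral: completing the square in $\int e^{ix\xi+\frac{ih}{2}\langle Q\xi,\xi\rangle}\,d\xi$ shows $e^{ihA(D)}b=K_h*b$ with
$$K_h(x)=c_{n,Q}\,h^{-n/2}e^{-\frac{i}{2h}\langle Q^{-1}x,x\rangle},\qquad c_{n,Q}=(2\pi)^{-n/2}|\det Q|^{-1/2}e^{\frac{i\pi}{4}\sgn Q},$$
so $|K_h|$ is a constant multiple of $h^{-n/2}$ and $K_h\notin L^1$. The rescaling $y=x-\sqrt h\,w$ turns the singular normalization into a genuine oscillatory integral of order zero,
$$e^{ihA(D)}b(x)=c_{n,Q}\int_{\mathbb{R}^n}e^{-\frac{i}{2}\langle Q^{-1}w,w\rangle}\,b(x-\sqrt h\,w)\,dw,$$
which I would first justify for $b\in\mathcal S$ and then extend to $b\in S^0$ via the bound being proved.

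Cutting off with $\chi\in C_c^\infty(\mathbb{R}^n)$ equal to $1$ near $0$, the piece $\int\chi(w)(\cdots)\,dw$ is $\le\|\chi\|_{L^1}\|b\|_{L^\infty}$; on $\supp(1-\chi)$ the phase $\psi(w)=-\tfrac12\langle Q^{-1}w,w\rangle$ has $\nabla\psi(w)=-Q^{-1}w$ with $|\nabla\psi(w)|\sim|w|$ bounded away from $0$, so I integrate by parts $n+1$ times using $L=i^{-1}|\nabla\psi|^{-2}\,\nabla\psi\cdot\nabla_w$, which satisfies $Le^{i\psi}=e^{i\psi}$ (with a standard regularization at infinity to discard boundary terms). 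Each application of $L^{t}$ either differentiates a coefficient, improving its $|w|^{-k}$ decay to $|w|^{-k-2}$, or differentiates $b(x-\sqrt h\,w)$, producing a factor $\sqrt h$, one extra derivative on $b$, and one extra power $|w|^{-1}$; so after $n+1$ steps every term is $h^{j/2}$ times a coefficient decaying like $|w|^{-(2(n+1)-j)}$ with $2(n+1)-j\ge n+1>n$ (hence integrable on $|w|>1$) times $(\partial^\gamma b)(x-\sqrt h\,w)$ with $|\gamma|=j\le n+1$. Bounding $|(\partial^\gamma b)(x-\sqrt h\,w)|\le\|\partial^\gamma b\|_{L^\infty}$ and integrating in $w$ gives the desired bound, uniformly in $x$.

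I expect the main obstacle to be the bookkeeping in the iterated integration by parts: one must check that in every one of the terms produced by $(L^{t})^{n+1}$ the trade-off — a $|w|^{-2}$ gain when a derivative hits a coefficient versus a $\sqrt h\,|w|^{-1}$ gain when it hits $b$ — leaves the slowest-decaying term with decay order strictly larger than $n$, and simultaneously that the number of $\sqrt h$ factors never exceeds the number of derivatives that have landed on $b$; these two facts are exactly what produce the matching weights $h^{|\gamma|/2}$ and cap the number of needed derivatives of $b$ at $n+1$. The Fresnel computation of $K_h$ and the justification for differentiating and integrating by parts under the oscillatory integral are routine, handled by first working with Schwartz $b$ and then passing to the limit using the estimate itself.
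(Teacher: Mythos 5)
Your proposal is correct and follows essentially the same route as the paper: reduce to $N=0$ (the paper states this reduction, you justify it via the multiplier commuting with $\partial^\alpha$), write $e^{ihA(D)}$ as a Fresnel-kernel oscillatory integral, rescale by $\sqrt h$, split with a cutoff near the origin, bound the near piece trivially and the far piece by $n+1$ non-stationary-phase integrations by parts with $L=|\nabla\psi|^{-2}\nabla\psi\cdot D$. Your explicit bookkeeping of the $|w|^{-2}$ versus $\sqrt h\,|w|^{-1}$ trade-off and the Schwartz-approximation justification only make explicit what the paper leaves implicit.
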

\begin{proof}
We just need to prove for $N=0$. Let $\chi\in C^\infty_0(\mathbb{R}^n)$ be a cutoff function near $0$ (i.e. $\chi(x)=1$ in a neighbourhood of $0$), then
\begin{align*}
    e^{ihA(D)}a
    &=\frac{C}{h^\frac{n}{2}}\int_{\mathbb{R}^n}e^{\frac{i\phi(w)}{h}}a(z-w)dw\\
    &=\frac{C}{h^\frac{n}{2}}\int_{\mathbb{R}^n}e^{\frac{i\phi(w)}{h}}\chi\left(\frac{w}{\sqrt{h}}\right)a(z-w)dw+\frac{C}{h^\frac{n}{2}}\int_{\mathbb{R}^n}e^{\frac{i\phi(w)}{h}}\left(1-\chi\left(\frac{w}{\sqrt{h}}\right)\right)a(z-w)dw\\
    &=A_1+A_2
\end{align*}
where $\phi(w)=-\frac{1}{2}\langle Q^{-1}w,w\rangle$.

We have $|A_1|\leq C\left|\int_{\mathbb{R}^n}e^{i\phi(w)}\chi(w)a(z-\sqrt{h}w)dw\right|\leq C\|a\|_{L^\infty}$. And let $L=\frac{\langle \partial \phi, D\rangle}{|\partial \phi|^2}$
\begin{align*}
    |A_2|&\leq C\left|\int_{\mathbb{R}^n}e^{i\phi(w)}(1-\chi(w))a(z-\sqrt{h}w)dw\right|\\
    &=C\left|\int_{\mathbb{R}^n}(L^{n+1}e^{i\phi(w)})(1-\chi(w))a(z-\sqrt{h}w)dw\right|\\
    &=C\left|\int_{\mathbb{R}^n}e^{i\phi(w)}(L^T)^{n+1}((1-\chi(w))a(z-\sqrt{h}w))dw\right|\\
    &\leq C\sum\limits_{|\alpha|\leq n+1}h^{\frac{|\alpha|}{2}}\|\partial^\alpha a\|_{L^\infty}.
\end{align*}

\end{proof}
Now in general, we have
\begin{align*}
    e^{ihA(D)}a&=\sum\limits_{k=0}^N\frac{i^kh^k}{k!}A(D)^ka+\frac{i^{N+1}h^{N+1}}{N!}\int_0^1(1-t)^Ne^{ithA(D)}A(D)^{N+1}adt\\
    &=\sum\limits_{k=0}^N\frac{i^kh^k}{k!}A(D)^ka+O_{N}(h^{N+1})\sum\limits_{|\alpha|\leq n+1}h^{\frac{|\alpha|}{2}}\|\partial^\alpha A(D)^{N+1}a\|_{L^\infty}.
\end{align*}
So we get
\begin{corr}\label{cor5}
Let $a,b\in S^0(T^*\mathbb{T}^n)$, then there exists a universal constant $K$ such that
$$\|\Op^w_h(a)\Op^w_h(b)-\Op^w_h(ab)\|\leq Ch\sum\limits_{|\alpha|\leq Kn}h^\frac{|\alpha|}{2}\|\partial^\alpha \sigma(D)(a\otimes b)\|_{L^\infty}$$
where $\sigma(x,\xi,y,\eta)=\langle\xi,y\rangle-\langle x,\eta\rangle$ is the standard symplectic product on $T^*\mathbb{R}^{2n}$.
\end{corr}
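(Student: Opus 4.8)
The plan is to read off Corollary~\ref{cor5} from the exact Weyl composition formula, combined with the preceding lemma and Lemma~\ref{lem3}. By the composition formula for Weyl quantizations (understood at the level of symbols through the oscillatory integral representation used in the preceding lemma; see \cite{Zw}), one has $\Op^w_h(a)\Op^w_h(b)=\Op^w_h(c)$, where
$$c(x,\xi)=e^{\frac{ih}{2}\sigma(D)}\big(a(x,\xi)b(y,\eta)\big)\Big|_{y=x,\,\eta=\xi},\qquad \sigma(D)=\langle D_\xi,D_y\rangle-\langle D_x,D_\eta\rangle,$$
and the overall sign in $\sigma$ is irrelevant below. The operator $\tfrac12\sigma(D)$ in the $4n$ variables $(x,\xi,y,\eta)$ is of the form $\tfrac12\langle QD,D\rangle$ with $Q$ real, symmetric and nonsingular, so $e^{\frac{ih}{2}\sigma(D)}$ equals $e^{ihA(D)}$ with $A(D)=\tfrac12\sigma(D)$, exactly an operator of the type treated in the preceding lemma (now in dimension $4n$); we apply it to $a\otimes b\in S^0(T^*\mathbb{R}^{2n})$.

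Next I would expand $e^{\frac{ih}{2}\sigma(D)}$ to first order by means of the integral remainder written just before the corollary, taken with $N=0$:
$$c-ab=\Big(\tfrac{ih}{2}\int_0^1 e^{\frac{ith}{2}\sigma(D)}\,\sigma(D)(a\otimes b)\,dt\Big)\Big|_{y=x,\,\eta=\xi}.$$
Since $\partial^\beta$ commutes with the constant-coefficient multipliers $e^{\frac{ith}{2}\sigma(D)}$ and $\sigma(D)$, and $\sigma(D)(a\otimes b)\in S^0(T^*\mathbb{R}^{2n})$, the $N=0$ case of the preceding lemma (in dimension $4n$, with $th$ in place of $h$, using $t\le1$) gives, for every multi-index $\beta$,
$$\big\|\partial^\beta e^{\frac{ith}{2}\sigma(D)}\sigma(D)(a\otimes b)\big\|_{L^\infty}\le C\sum_{|\alpha|\le 4n+1}h^{\frac{|\alpha|}{2}}\big\|\partial^{\alpha+\beta}\sigma(D)(a\otimes b)\big\|_{L^\infty}.$$
Restriction to $\{y=x,\,\eta=\xi\}$ turns a $\beta$-derivative of the diagonal symbol into a sum of at most $2^{|\beta|}$ full $\beta$-order derivatives evaluated on the diagonal, hence $c-ab\in S^0(T^*\mathbb{T}^n)$ and
$$\big\|\partial^\beta(c-ab)\big\|_{L^\infty}\le Ch\sum_{|\alpha|\le 4n+1}h^{\frac{|\alpha|}{2}}\big\|\partial^{\alpha+\beta}\sigma(D)(a\otimes b)\big\|_{L^\infty}.$$

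Finally I would apply Lemma~\ref{lem3} to the symbol $c-ab$, obtaining
$$\|\Op^w_h(a)\Op^w_h(b)-\Op^w_h(ab)\|=\|\Op^w_h(c-ab)\|\le C\sum_{|\beta|\le Kn}h^{\frac{|\beta|}{2}}\big\|\partial^\beta(c-ab)\big\|_{L^\infty},$$
and then inserting the bound above and reindexing by $\gamma=\alpha+\beta$ (so $|\gamma|\le(K+4)n+1$, with only boundedly many pairs $(\alpha,\beta)$ contributing to each $\gamma$, which I absorb into $C$), this becomes
$$\|\Op^w_h(a)\Op^w_h(b)-\Op^w_h(ab)\|\le Ch\sum_{|\gamma|\le K'n}h^{\frac{|\gamma|}{2}}\big\|\partial^\gamma\sigma(D)(a\otimes b)\big\|_{L^\infty},$$
which is the claimed inequality once $K'$ is renamed $K$; the factor $\tfrac12$ coming from $\tfrac12\sigma$ is harmlessly absorbed into $C$.

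I do not expect a genuine conceptual obstacle here: the statement is a quantitative repackaging of the standard symbolic calculus, with the first-order remainder of the composition exponential controlled exactly as in the preceding lemma. The points that need care are (i) checking that $a\otimes b$ and $\sigma(D)(a\otimes b)$ remain in $S^0$, so that both the preceding lemma and Lemma~\ref{lem3} are applicable; (ii) noting that separate periodicity in $x$ and in $y$ is preserved by the Fourier multiplier $e^{\frac{ih}{2}\sigma(D)}$, so that $c$ is an honest symbol on $T^*\mathbb{T}^n$; and (iii) tracking the powers of $h$ carefully so that they combine to exactly $h\cdot h^{|\gamma|/2}$ while the total number of derivatives stays linear in $n$.
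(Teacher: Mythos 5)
Your argument is correct and is essentially the paper's own route: the proof given there simply cites the Weyl composition formula of \cite[Theorem 4.11]{Zw} together with the ``previous discussion'' (the $N=0$ Taylor expansion of $e^{ihA(D)}$ with integral remainder, bounded via the stationary-phase lemma applied to $A(D)=\tfrac12\sigma(D)$ in $4n$ variables), followed by Lemma~\ref{lem3} applied to the remainder symbol — exactly the steps you spell out, including the diagonal restriction and the reindexing of derivatives absorbed into $C$. No further comparison is needed.
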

\begin{proof}
This follows by our previous discussion and composition formula for pseudo-differential operators \cite[Theorem 4.11]{Zw}.
\end{proof}
\subsection{Propagation of singularities}
We will study the quantitative version of propagation of singularities of Schr\"odinger equation.
First we recall an important lemma which relates Schr\"odinger equation with geodesic flow on torus.
\begin{lem}[Egorov theorem]\label{thm4}
Let $a\in S^0(\mathbb{T}^n)$, $v_h(t)=e^{-iht\Delta}$ be a unitary operator, and $\phi_t(x,\xi)=(x+2t\xi,\xi)$ be the corresponding Hamiltonian flow. Then
$$v_h(t)\Op_h^w(a)v_h(-t)=\Op_h^w(a\circ \phi_{t}).$$
\end{lem}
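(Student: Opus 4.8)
The plan is to prove the identity by an operator-ODE argument, exploiting that $-h^2\Delta=\Op^w_h(|\xi|^2)$ is a second-order differential operator with a \emph{quadratic} symbol, so Egorov's theorem holds with no remainder. Set $A(t)=\Op^w_h(a\circ\phi_t)$. Since $\phi_t(x,\xi)=(x+2t\xi,\xi)$ is affine with Jacobian independent of $\xi$, one has $a\circ\phi_t\in S^0(\mathbb{T}^n)$ for every $t$ (the seminorms growing only polynomially in $t$), so $A(t)$ is bounded on $L^2(\mathbb{T}^n)$ by Lemma \ref{lem3} and depends continuously on $t$. All $t$-differentiations below are carried out on the dense subspace $C^\infty(\mathbb{T}^n)$, on which $\Op^w_h$ of any symbol and the Fourier multipliers $v_h(\pm t)$, $\Delta$ all act and compose freely, and on which $\frac{d}{dt}A(t)=\Op^w_h\big(\tfrac{d}{dt}(a\circ\phi_t)\big)$ by differentiating under the oscillatory integral. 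Since $a\circ\phi_0=a$, it suffices to prove that $C(t):=v_h(-t)A(t)v_h(t)$ is constant on $C^\infty(\mathbb{T}^n)$: then $C(t)=C(0)=\Op^w_h(a)$ there, hence by density and $L^2$-boundedness of all the terms on $L^2(\mathbb{T}^n)$, and conjugating back yields $v_h(t)\Op^w_h(a)v_h(-t)=\Op^w_h(a\circ\phi_t)$.

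The key input is the \emph{exact} commutator identity
$$[-h^2\Delta,\Op^w_h(b)]=-2ih\,\Op^w_h(\xi\cdot\nabla_xb)=\tfrac{h}{i}\,\Op^w_h\big(\{|\xi|^2,b\}\big),\qquad b\in S^0(\mathbb{T}^n),$$
with \emph{no} remainder term, which is where quadraticity is used. It follows either from the composition formula (the Moyal expansion of the commutator symbol, beyond the Poisson-bracket term, only involves terms with at least three derivatives falling on $|\xi|^2$, hence truncates), or, most transparently, by a direct computation: writing out $-h^2\Delta\,\Op^w_h(b)u$ and $\Op^w_h(b)(-h^2\Delta u)$ from the oscillatory-integral definition of $\Op^w_h$ and integrating by parts in $y$, the $\Op^w_h(|\xi|^2b)$ and $\Op^w_h(\Delta_xb)$ contributions cancel and only $-2ih\,\Op^w_h(\xi\cdot\nabla_xb)$ survives. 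Applying this with $b=a\circ\phi_t$, together with the chain-rule identity $\frac{d}{dt}(a\circ\phi_t)(x,\xi)=2\xi\cdot(\nabla_xa)(x+2t\xi,\xi)=2\xi\cdot\nabla_x(a\circ\phi_t)(x,\xi)$, gives
$$\frac{d}{dt}A(t)=\Op^w_h\big(2\xi\cdot\nabla_x(a\circ\phi_t)\big)=\tfrac{i}{h}[-h^2\Delta,A(t)]=-ih[\Delta,A(t)].$$

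Finally, $v_h(\pm t)=e^{\mp iht\Delta}$ satisfies $\frac{d}{dt}v_h(\pm t)=\mp ih\Delta\,v_h(\pm t)$, and $\Delta$ commutes with $v_h(\pm t)$; differentiating $C(t)=v_h(-t)A(t)v_h(t)$ on $C^\infty(\mathbb{T}^n)$ and inserting the previous display gives
$$\frac{d}{dt}C(t)=ih[\Delta,C(t)]+v_h(-t)\big(-ih[\Delta,A(t)]\big)v_h(t)=ih[\Delta,C(t)]-ih[\Delta,C(t)]=0,$$
where the middle term is rewritten by pulling $\Delta$ through $v_h(\pm t)$. Hence $C(t)\equiv\Op^w_h(a)$, proving the theorem as explained in the first paragraph. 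The one point carrying real content is the exactness of the symbol identity in the second step — this is exactly what promotes the usual $O(h)$ Egorov theorem to an honest equality — while the $t$-differentiations (legitimate in the strong operator topology on $C^\infty(\mathbb{T}^n)$, thanks to the explicit integral formula for $\Op^w_h$ and the strongly continuous unitary group $v_h(t)$) and the bookkeeping $a\circ\phi_t\in S^0$ are routine. A completely elementary alternative that bypasses the ODE is to test both sides against the Fourier basis $e_k(x)=e^{2\pi ik\cdot x/L}$, on which $v_h(t)$ acts as a scalar, and to match matrix elements in terms of the Fourier coefficients of $a$ in the $x$-variable; this works but is more computational.
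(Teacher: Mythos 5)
Your proof is correct and follows essentially the same route as the paper: conjugate $\Op^w_h(a\circ\phi_t)$ by $v_h(\pm t)$, differentiate in $t$, and use the exact commutator identity $[-h^2\Delta,\Op^w_h(b)]=-ih\,\Op^w_h(\{|\xi|^2,b\})$ together with the chain rule to see the derivative vanishes. The extra care you take with domains and with justifying exactness of the commutator (which the paper simply cites from Zworski) is fine but does not change the argument.
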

\begin{proof}
We recall the identity for Weyl quantization following e.g. by an explicit computation from \cite[Theorem 4.6]{Zw}
$$[-h^2\Delta, \Op^w_h(a)]=-ih\Op^w_h(\{|\xi|^2,a\}).$$
Then let $A(t)=v_h(-t)\Op_h^w(a\circ \phi_{t})v_h(t)$, we get
\begin{align*}
    \partial_t A(t)&=v_h(-t)(-\frac{i}{h}[-h^2\Delta,\Op^w_h(a\circ \phi_{t})]+\Op^w_h(2\xi\cdot\partial_x a\circ\phi_{t}))v_h(t)\\
    &=v_h(-t)(-\Op^w_h(\{|\xi|^2,a\circ \phi_{t}\})+\Op^w_h(2\xi\cdot\partial_x a\circ\phi_{t}))v_h(t)\\
    &=0.
\end{align*}
So
\begin{align*}
    A(t)=A(0)=\Op^w_h(a)
\end{align*}
or
\begin{align*}
    v_h(t)\Op_h^w(a)v_h(-t)=\Op_h^w(a\circ \phi_{t}).
\end{align*}
\end{proof}
In addition, we have
\begin{lem}\cite[Lemma 4.2]{DyJin}\label{lem2}
$$\|e^{it(-h^2\Delta-1)/h}u-u\|_{L^2}\leq \frac{|t|}{h}\|(-h^2\Delta-1)u\|_{L^2}$$
\end{lem}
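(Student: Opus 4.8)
The plan is to give a short self-contained proof via the spectral theorem. Since $\mathbb{T}^n$ is compact, the operator $P:=-h^2\Delta-1$ is self-adjoint on $L^2(\mathbb{T}^n)$ and admits an orthonormal eigenbasis $\{\phi_j\}$ with real eigenvalues $\lambda_j$, and $e^{itP/h}$ is the unitary group it generates. First I would expand $u=\sum_j c_j\phi_j$ and record, by Parseval,
$$\|e^{itP/h}u-u\|_{L^2}^2=\sum_j\bigl|e^{it\lambda_j/h}-1\bigr|^2|c_j|^2.$$

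Next I would apply the elementary inequality $|e^{i\theta}-1|=2|\sin(\theta/2)|\le|\theta|$, valid for all $\theta\in\mathbb{R}$, with $\theta=t\lambda_j/h$, so that $\bigl|e^{it\lambda_j/h}-1\bigr|^2\le t^2\lambda_j^2/h^2$ for each $j$. Summing against $|c_j|^2$ then gives
$$\|e^{itP/h}u-u\|_{L^2}^2\le\frac{t^2}{h^2}\sum_j\lambda_j^2|c_j|^2=\frac{t^2}{h^2}\|Pu\|_{L^2}^2,$$
and taking square roots yields the claim. If $u\notin H^2(\mathbb{T}^n)$ the right-hand side is $+\infty$ and there is nothing to prove, so no density argument is needed.

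An essentially equivalent route, worth mentioning because it is the Duhamel form used repeatedly later, is the following: for $u\in H^2(\mathbb{T}^n)$ the curve $s\mapsto e^{isP/h}u$ is $C^1$ from $\mathbb{R}$ into $L^2(\mathbb{T}^n)$ with derivative $\tfrac{i}{h}e^{isP/h}Pu$, hence $e^{itP/h}u-u=\tfrac{i}{h}\int_0^t e^{isP/h}Pu\,ds$; the bound then follows from the triangle inequality for this integral together with the unitarity $\|e^{isP/h}Pu\|_{L^2}=\|Pu\|_{L^2}$. In either approach I do not expect a genuine obstacle; the only point demanding a little care is the justification of differentiating under the integral (or, in the spectral picture, the interchange of sum and limit), which is routine.
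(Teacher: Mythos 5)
Your proof is correct, and your second route (differentiating $s\mapsto e^{isP/h}u$ and integrating, using unitarity) is exactly the paper's own argument, which simply records the identity $\partial_t e^{it(-h^2\Delta-1)/h}u=\tfrac{i}{h}e^{it(-h^2\Delta-1)/h}(-h^2\Delta-1)u$ and calls the bound obvious. The spectral-expansion version with $|e^{i\theta}-1|\le|\theta|$ is an equally valid, essentially equivalent alternative, so there is nothing to add.
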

\begin{proof}
It is obvious from
\begin{align*}
\partial_t e^{it(-h^2\Delta-1)/h}u=\frac{i}{h}e^{it(-h^2\Delta-1)/h}(-h^2\Delta-1)u.
\end{align*}
\end{proof}
Combine Lemma \ref{thm4} and Lemma \ref{lem2}, we have
\begin{align}\label{3.1}
    \|\Op^w_h(a\circ \phi_{t})u\|\leq \|\Op^w_h(a)u\|+\frac{|t|}{h}\|\Op^w_h(a)\|\|(-h^2\Delta-1)u\|.
\end{align}
Now we can prove a quantitative version of propagation of singularities.
\begin{prop}\label{prop8}
Let $a\in C^\infty_0(T^*\mathbb{T}^n;[0,1])$ and $b\in S^0(T^*\mathbb{T}^n;[0,1])$. If $\exists t_1,\cdots,t_M\in (0,t)$ such that $\forall p\in \supp a$, $\exists j$ such that $\phi_{t_j}(p)\in \{b= 1\}$, then $\forall u\in H^2(\mathbb{T}^n)$, we have
\begin{align*}
    \|\Op^w_h(a)u\|_{L^2}^2\leq C_{a,b,1}\|\Op^w_h(b)u\|_{L^2}^2+C_{a,b,2}\frac{|t|^2}{h^2}\|(-h^2\Delta-1)u\|_{L^2}^2+C_{a,b,3}h\|u\|_{L^2}^2
\end{align*}
where $C_{a,b,1}=C\|a\otimes b\|_{C^{Kn}_{h;M,t}}M $, $C_{a,b,2}=CM\|a\otimes b\|_{C^{Kn}_{h;M,t}}\|b\|_{C^{Kn}_h}^2$ and $C_{a,b,3}=C\|a\otimes b\|_{C^{Kn}_{h:M,t;2}}\|b\|_{C^{Kn}_h}$.

Here we use the notation:
$\|f\|_{C^k_h}=\sum\limits_{|\alpha|\leq k}h^\frac{|\alpha|}{2}\|\partial^\alpha f\|_{L^\infty}$,
$$\|a\otimes b\|_{C^k_{h;s,t}}=\sum\limits_{j\leq k}h^\frac{j}{2}\sum\limits_{l_0+l_1+\cdots+l_m=j}s^mt^{l_1+\cdots+l_m}\|a\|_{C^{l_0}}\|b\|_{C^{l_1}}\cdots\|b\|_{C^{l_m}},$$
$$\|a\otimes b\|_{C^k_{h;s,t;2}}=\sum\limits_{j\leq k}h^\frac{j}{2}\sum\limits_{l_0+l_1+\cdots+l_m=j+2}s^mt^{l_1+\cdots+l_m}\|a\|_{C^{l_0}}\|b\|_{C^{l_1}}\cdots\|b\|_{C^{l_m}}.$$
\end{prop}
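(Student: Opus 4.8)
The plan is to reduce the statement to a telescoping sum over a single geodesic segment. The key mechanism is the inequality \eqref{3.1}, which lets us transport an observation of $\Op^w_h(a\circ\phi_t)u$ back to an observation of $\Op^w_h(a)u$ at the cost of an error controlled by $\frac{|t|}{h}\|(-h^2\Delta-1)u\|$. The hypothesis says that for every point $p$ in the compact set $\supp a$, at least one of the times $t_1,\dots,t_M$ carries $p$ into the region $\{b=1\}$. The natural first step is to turn this pointwise covering statement into a partition of unity: choose functions $a_1,\dots,a_M\in C^\infty_0(T^*\mathbb{T}^n;[0,1])$ with $\sum_j a_j^2 = a$ (or $\sum a_j = a$, adjusting squares as convenient) such that on $\supp a_j$ we have $\phi_{t_j}(\supp a_j)\subset\{b=1\}$. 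Since $a$ is compactly supported and the flow $\phi_t$ is smooth, such a finite cover exists and the derivatives of the $a_j$ can be taken to be controlled by those of $a$ together with $t$; this is where the combinatorial norm $\|a\otimes b\|_{C^{Kn}_{h;M,t}}$ and its weights $s^m t^{l_1+\cdots+l_m}$ enter, accounting for the $m$-fold products of $b$-derivatives generated along the way.

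Second, I would estimate each piece $\|\Op^w_h(a_j)u\|^2$ separately. On $\supp a_j$ the symbol $a_j\circ\phi_{-t_j}$ is supported where $b=1$, so $b\cdot(a_j\circ\phi_{-t_j}) = a_j\circ\phi_{-t_j}$ up to the flexibility in $b$ being $[0,1]$-valued; more precisely, by Corollary~\ref{cor5} the operator $\Op^w_h(a_j\circ\phi_{-t_j})$ equals $\Op^w_h(a_j\circ\phi_{-t_j})\Op^w_h(b)$ modulo an $O(h)$ error in operator norm (using that $\{b=1\}$ contains the relevant support so the symbol product is unchanged to leading order), and a second application gives the $h\|u\|^2$ remainder term with constant $C_{a,b,3}$. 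Applying \eqref{3.1} with the symbol $a_j\circ\phi_{-t_j}$ and time $t_j$ then yields
\begin{align*}
\|\Op^w_h(a_j)u\|_{L^2}^2 \leq C\|\Op^w_h(b)u\|_{L^2}^2 + C\frac{t_j^2}{h^2}\|\Op^w_h(a_j\circ\phi_{-t_j})\|^2\|(-h^2\Delta-1)u\|_{L^2}^2 + Ch\|u\|_{L^2}^2,
\end{align*}
where the operator norms are bounded via Lemma~\ref{lem3} and the composition estimate, producing the factors $\|b\|_{C^{Kn}_h}$ and $\|b\|_{C^{Kn}_h}^2$ in $C_{a,b,2}$ and $C_{a,b,3}$.

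Third, I would sum over $j=1,\dots,M$: since $\sum_j a_j^2 = a$ (with $0\le a\le 1$, so roughly $\Op^w_h(a)^*\Op^w_h(a) \le \sum_j \Op^w_h(a_j)^*\Op^w_h(a_j)$ modulo $O(h)$ by the sharp Gårding / composition estimate), we get $\|\Op^w_h(a)u\|^2 \lesssim \sum_j \|\Op^w_h(a_j)u\|^2 + Ch\|u\|^2$, and each of the $M$ terms contributes as above. Collecting the constants gives the extra factor $M$ in $C_{a,b,1}$ and $C_{a,b,2}$, while the $C^{Kn}_{h;M,t;2}$ norm (with total degree $j+2$) in $C_{a,b,3}$ absorbs the two extra derivatives lost in the double application of Corollary~\ref{cor5}. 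The main obstacle, and the step requiring the most care, is the bookkeeping: one must verify that the $C^k$-seminorms of the transported and partitioned symbols $a_j\circ\phi_{-t_j}$ are genuinely bounded by the combinatorial expressions defining $\|a\otimes b\|_{C^k_{h;M,t}}$ — i.e. that differentiating the flow $\phi_{-t_j}$ $l$ times costs a factor $t^l$, that building the partition of unity costs the factor $s^m=M^m$ from the $m$ nested cutoffs, and that the $h^{|\alpha|/2}$ weights propagate correctly through Lemma~\ref{lem3} and Corollary~\ref{cor5}. This is routine in principle but the uniformity in $M$ and $t$ is exactly what makes the estimate "quantitative," so it must be tracked explicitly rather than hidden in an $O(\cdot)$.
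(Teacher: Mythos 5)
Your overall architecture (Egorov/\eqref{3.1} to transport the observation, composition Corollary~\ref{cor5} to trade the transported symbol for $\Op^w_h(b)$, then sum over the $M$ times) is in the right spirit, but the first step --- the partition of unity $\sum_j a_j^2=a$ with $\phi_{t_j}(\supp a_j)\subset\{b=1\}$ and derivatives ``controlled by those of $a$ together with $t$'' --- is a genuine gap, and it is exactly the step you defer as ``routine bookkeeping.'' The set $\{b=1\}$ is only a closed level set; the hypothesis gives a pointwise covering of $\supp a$ by the closed sets $\phi_{-t_j}(\{b=1\})$, not a covering by their interiors, so a smooth partition with supports inside these sets need not exist at all (e.g.\ if some $p\in\supp a$ lies on the topological boundary of the unique set containing it, any smooth $a_j$ nonvanishing at $p$ spills outside). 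Even when the interiors do cover, the $C^k$ norms of such a partition are governed by the thickness of $\{b=1\}$ and the overlaps of its flow preimages, which are \emph{not} controlled by $\|a\|_{C^k},\|b\|_{C^k},M,t$; since the whole point of the proposition is that $C_{a,b,1},C_{a,b,2},C_{a,b,3}$ depend only on the combinatorial norms $\|a\otimes b\|_{C^{Kn}_{h;M,t}}$ etc.\ (this is what lets it be applied later with $\epsilon$-dependent symbols), the claimed constants would not follow from your construction. A secondary, fixable point: the passage $\|\Op^w_h(a)u\|^2\lesssim\sum_j\|\Op^w_h(a_j)u\|^2+O(h)\|u\|^2$ via sharp G\aa rding is not available in quantitative form from Lemma~\ref{lem3} and Corollary~\ref{cor5}; you can avoid it by partitioning $a^2$ rather than $a$ and using only the composition formula.

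The paper's proof avoids any partition of unity and any support condition relative to $\{b=1\}$: it sets $\chi=\sum_j|b\circ\phi_{t_j}|^2$, which is $\geq 1$ on $\supp a$ directly from the hypothesis (at each $p$ at least one summand equals $1$), defines $q=|a|^2/\chi$, and uses the \emph{exact} identity $|a|^2=\sum_j q\,|b\circ\phi_{t_j}|^2$. Quantizing this identity with Corollary~\ref{cor5}, writing each term as $\langle\Op^w_h(q)v_j,v_j\rangle$ with $v_j=\Op^w_h(\phi_{t_j}^*b)u$, bounding $\|\Op^w_h(q)\|$ by Lemma~\ref{lem3} (the quotient rule for $q$ is precisely where the weights $M^m t^{l_1+\cdots+l_m}$ in $\|a\otimes b\|_{C^{Kn}_{h;M,t}}$ come from), and then applying \eqref{3.1} to $b$ itself gives the stated constants. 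If you want to keep your per-piece structure, the viable repair is essentially this same device: build the pieces as $a_j^2=|a|^2\,\psi(b\circ\phi_{t_j})/\chi$ (so their norms are controlled by the combinatorial expressions) and replace the exact identity $b\cdot(a_j\circ\phi_{-t_j})=a_j\circ\phi_{-t_j}$ by a division by $b$ on $\{b\geq 1/2\}$; but then you have reproduced the paper's mechanism rather than a genuinely independent one.
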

\begin{proof}
Let $\chi=\sum\limits_j |b\circ \phi_{t_j}|^2\geq 1$ on $\supp a$. Let $q=\frac{|a|^2}{\chi}$, then by Lemma \ref{lem3} and Corollary~\ref{cor5}
\begin{align*}
\langle\Op^w_h(|a|^2)u,u\rangle
&\leq \sum\limits_{j}\langle \Op^w_h(\overline{\phi^*_{t_j}b})\Op^w_h(q)\Op^w_h(\phi_{t_j}^*b)u,u\rangle\\
&+Ch(\sum\limits_j\|\sigma(D)(\phi^*_{t_j}b\otimes q)\|_{C^{Kn}_h}\|b\|_{C^{Kn}_{h}}+\|\sigma(D)(q\overline{\phi^*_{t_j}b}\otimes \phi_{t_j}^*b)\|_{C^{Kn}_h})\|u\|^2\\
&\leq C\|q\|_{C^{Kn}_h}\sum\limits_{j} \|\Op^w_h(\phi_{t_j}^*b)u\|^2+Ch\|a\otimes b\|_{C^{Kn}_{h:M,t;2}}\|b\|_{C^{Kn}_h}\|u\|^2\\
&\leq C\|a\otimes b\|_{C^{Kn}_{h;M,t}}\sum\limits_{j} \|\Op^w_h(\phi_{t_j}^*b)u\|^2+Ch\|a\otimes b\|_{C^{Kn}_{h:M,t;2}}\|b\|_{C^{Kn}_h}\|u\|^2.
\end{align*}
By \eqref{3.1},
\begin{align*}
    \|\Op^w_h(\phi_{t_j}^*b)u\|-\|\Op^w_h(b)u\|&\leq \frac{|t|}{h}\|\Op^w_h(b)\|\|(-h^2\Delta-1)u\|
    \\ &\leq C\frac{|t|}{h}\|b\|_{C^{Kn}_{h}}\|(-h^2\Delta-1)u\|.
\end{align*}
So
\begin{align*}
\|\Op^w_h(a)u\|^2&=\langle \Op^w_h(|a|^2)u,u\rangle\\
&\leq C\|a\otimes b\|_{C^{Kn}_{h;M,t}}M \|\Op^w_h(b)u\|^2+Ch\|a\otimes b\|_{C^{Kn}_{h:M,t;2}}\|b\|_{C^{Kn}_h}\|u\|^2\\
&+CM\frac{t^2}{h^2}\|a\otimes b\|_{C^{Kn}_{h;M,t}}\|b\|_{C^{Kn}_h}^2\|(-h^2\Delta-1)u\|^2.
\end{align*}
\end{proof}

\section{Rational and irrational directions}
To prove Theorem \ref{thm1}, the main point is to deduce high frequency estimate by considering geodesic flow on torus. However, the dynamics on torus does not satisfy the geometric control condition. So we divide the discussion into two cases-rational and irrational as follows.

Take $\Omega_\epsilon=B(0,\epsilon)\subset\mathbb{T}^2=\mathbb{R}^2/\mathbb{Z}^2$ and $\phi_t(x,\xi)=(x+2t\xi,\xi)$ be the geodesic flow. We will always assume $\epsilon\ll 1$. We give the following definition
\begin{defi}
Two directions $\xi, \xi'\in \mathbb{R}^2_\xi\setminus\{0\}$ are equivalent iff $\xi=\lambda\xi'$ for some $\lambda\in \mathbb{R}^+$. We denote $\xi\sim\xi'$ for equivalent directions. If $\eta\sim (a,b)\in\mathbb{Z}^2\setminus\{0\}$, then it is called rational. For rational directions, define
$$L_\eta=\sqrt{a^2+b^2}$$
to be the length of the primitive geodesic in direction $\eta$, where ${\rm gcd}(a,b)=1$. If a rational direction $\eta$ satisfies
$$L_\eta^2=a^2+b^2<\frac{32}{\epsilon^2},$$
then we call $\eta$ an $\epsilon$-rational direction.
\end{defi}
\begin{prop}\label{prop9}
Let $\xi\in \mathbb{S}^1$ be a direction of length $1$. If there exists constant $C>0$ such that
\begin{align}\label{4.1}
    |\arg\xi-\arg\eta|\geq \frac{\epsilon}{CL_\eta}
\end{align}
for any $\epsilon$-rational direction $\eta$, then there exists $C'=C'(C)$ such that for any $x\in \mathbb{T}^2$, $\exists t\in [0,C'\epsilon^{-1}]$ such that $x+t\xi\in B(0,\frac{\epsilon}{3})$.
\end{prop}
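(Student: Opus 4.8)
The plan is to show that the geodesic line $\{x+t\xi:t\ge 0\}$ becomes $\tfrac{\epsilon}{3}$-dense on $\mathbb{T}^2$ after time $O_C(\epsilon^{-1})$, by reducing to a one–dimensional equidistribution statement and feeding in the Diophantine information supplied by \eqref{4.1}. First I would note that \eqref{4.1} rules out $\xi$ being any coordinate direction $(\pm1,0),(0,\pm1)$, the diagonal $(1,1)$, or any $\epsilon$-rational direction at all (for each of these $\arg\xi$ would coincide with $\arg\eta$ for some $\epsilon$-rational $\eta$). Composing with the isometries of $\mathbb{T}^2$ fixing $0$ given by $x_i\mapsto-x_i$ and $(x_1,x_2)\mapsto(x_2,x_1)$ — these lie in $\mathrm{GL}_2(\mathbb{Z})$, hence permute the $\epsilon$-rational directions, preserve each $L_\eta$, and change angle differences only by a sign, so \eqref{4.1} is preserved — one reduces to the case $\xi=(\xi_1,\xi_2)$ with $\xi_1>\xi_2>0$; then $\xi_1\in[\tfrac1{\sqrt2},1)$ and $m:=\xi_2/\xi_1\in(0,1)$.

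Next I would reduce to one dimension: the line $t\mapsto x+t\xi$ crosses the circle $\{x_1\equiv 0\}$ at the increasing times $t_k=\frac{\lceil x_1\rceil-x_1+k}{\xi_1}$, $k=0,1,2,\dots$, and at $t_k$ its second coordinate is $\beta+km\bmod 1$, where $\beta=x_2+(\lceil x_1\rceil-x_1)m$. Since $t_k\le\sqrt2\,(k+1)$, it suffices to find $k\le N$ with $N\le\frac{C'}{2\epsilon}$ (for a suitable $C'=C'(C)$) and $\mathrm{dist}_{\mathbb{T}^1}(\beta+km,0)<\tfrac{\epsilon}{3}$; as $\{\beta+jm\bmod 1\}$ is a rotation of $\{jm\bmod 1\}$, this follows once the finite orbit $\{jm\bmod 1:0\le j\le N\}$ is $\tfrac{\epsilon}{3}$-dense in $\mathbb{T}^1$.

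Now for the arithmetic. Let $p_n/q_n$ be the convergents of $m$ and let $q_n$ be the largest convergent denominator with $q_n\le\frac{4}{\epsilon}$; this exists, and the next one $q_{n+1}$ exists with $q_{n+1}>\frac{4}{\epsilon}$ (otherwise $m=a/b$ in lowest terms with $b\le\frac4\epsilon$, whence $a^2+b^2\le 2b^2\le\frac{32}{\epsilon^2}$ and $\xi$ would be $\epsilon$-rational). Since $0\le p_n<q_n$, one has $p_n^2+q_n^2<2q_n^2\le\frac{32}{\epsilon^2}$, so $\eta:=(q_n,p_n)$ is an $\epsilon$-rational direction and \eqref{4.1} gives $|\arg\xi-\arg\eta|\ge\frac{\epsilon}{C\sqrt{p_n^2+q_n^2}}$. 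Writing $\theta:=|\arg\xi-\arg\eta|\in[0,\pi/4]$ and using $\sin\theta=\frac{|\xi_1p_n-\xi_2q_n|}{\sqrt{p_n^2+q_n^2}}=\frac{\xi_1\,\|q_nm\|}{\sqrt{p_n^2+q_n^2}}$ (as $p_n$ is the nearest integer to $q_nm$) together with $\theta\le 2\sin\theta$, this forces $\|q_nm\|\ge\frac{\epsilon}{2C\xi_1}\ge\frac{\epsilon}{2C}$; combined with $\|q_nm\|<\frac1{q_{n+1}}$ this gives the key bound $q_{n+1}<\frac{2C}{\epsilon}$. Taking $N:=q_n+q_{n+1}$, so $\frac4\epsilon<q_{n+1}\le N\le\frac{2C+4}{\epsilon}$, I would then apply the three-distance (three-gap) theorem to $\{jm\bmod 1:0\le j\le N\}$: reading its continued-fraction form at $N=q_n+q_{n+1}$ — distinguishing the cases $a_{n+2}\ge 2$ (so $q_{n+1}$ is the largest convergent denominator $\le N$) and $a_{n+2}=1$ (so $N=q_{n+2}$), and using $\|q_nm\|=a_{n+2}\|q_{n+1}m\|+\|q_{n+2}m\|$ — one finds that the longest arc of $\mathbb{T}^1\setminus\{jm:0\le j\le N\}$ has length exactly $\|q_nm\|<\frac1{q_{n+1}}<\frac\epsilon4$. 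Hence the orbit is $\frac\epsilon8$-dense, in particular $\frac\epsilon3$-dense, and by the previous paragraph there is $k\le N<\frac{2C+4}{\epsilon}$ with $\mathrm{dist}_{\mathbb{T}^1}(\beta+km,0)<\frac\epsilon3$; since the first coordinate of $x+t_k\xi$ is $\equiv 0$, this yields $x+t_k\xi\in B(0,\frac\epsilon3)$ with $t_k\le\sqrt2(k+1)\le\frac{C'}{\epsilon}$ for $\epsilon$ small and a numerical multiple $C'=C'(C)$ of $C$ (after enlarging $C$ to assume $C\ge 2$, which only weakens \eqref{4.1}). The rational case $m\in\mathbb{Q}$ and the degenerate index $n=0$ are handled by the same computation.

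The step I expect to be the main obstacle is the arithmetic one above: extracting from \eqref{4.1} that the first convergent denominator of $m$ beyond the scale $\epsilon^{-1}$ is still of size $O_C(\epsilon^{-1})$, and then pinning down the three-gap constants so that the longest arc lands below $\tfrac\epsilon3$ rather than merely $O(\epsilon)$. A soft covering or pigeonhole argument gives $\tfrac\epsilon3$-density only after time $O_C(\epsilon^{-2})$; the gain to $O_C(\epsilon^{-1})$ genuinely needs the fine structure — namely that once one has passed the resonant denominator $q_n$, the very next scale $q_n+q_{n+1}$ already equidistributes, and this scale is $1/\|q_nm\|\lesssim_C\epsilon^{-1}$ precisely because of \eqref{4.1}.
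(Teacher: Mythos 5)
Your proof is correct, but it follows a genuinely different route from the paper. The paper never passes to a Poincar\'e section or to continued fractions: writing $\xi\sim(1,\alpha)$, it applies the Dirichlet pigeonhole principle to $\{n\alpha \bmod 1:\ 1\le n\le 3C/\epsilon\}$ to produce a rational direction $\eta\sim(n,m)$ with $|n\alpha-m|\le \epsilon/(2C)$, hence $|\arg\xi-\arg\eta|\le \pi\epsilon/(4CL_\eta)$; condition \eqref{4.1} then forces this $\eta$ to \emph{fail} to be $\epsilon$-rational, so $4\sqrt2/\epsilon\le L_\eta\le 3\sqrt2C/\epsilon$. Since the closed geodesic in direction $\eta$ meets a vertical circle in $n\ge 4/\epsilon$ equally spaced points, it enters $B(0,\epsilon/7)$ from any starting point within time $L_\eta$, and the $\xi$-line drifts from the $\eta$-line by at most $L_\eta|\arg\xi-\arg\eta|<\epsilon/C$ over that time, giving the conclusion with $C'=3\sqrt2\max(C,12)$. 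Your argument replaces this ``compare with a nearby non-$\epsilon$-rational periodic orbit'' step by the return map to $\{x_1=0\}$ (rotation by $m=\xi_2/\xi_1$), the identification of \eqref{4.1} with a lower bound $\|q_nm\|\ge\epsilon/(2C)$ at the last convergent denominator $q_n\le 4/\epsilon$ (hence $q_{n+1}\le 2C/\epsilon$), and the two-length case of the three-distance theorem at $N=q_n+q_{n+1}$, where the gaps are exactly $\|q_nm\|$ and $\|q_{n+1}m\|$ — all of which is sound, provided you do spell out the edge cases you flag (rational slope, $q_0=q_1=1$, and the symmetry reduction). What the paper's route buys is brevity and robustness: no continued-fraction bookkeeping, no degenerate cases, and the same $O(C\epsilon^{-1})$ time scale; what yours buys is sharper structural information (the exact gap lengths of the return orbit and the explicit mechanism that the first denominator past scale $\epsilon^{-1}$ is still $O(C\epsilon^{-1})$). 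One small correction to your closing remark: pigeonhole does not intrinsically lose a factor of $\epsilon^{-1}$ here — the $O(\epsilon^{-2})$ loss occurs only if one pigeonholes the orbit points themselves, whereas the paper pigeonholes in the space of directions and recovers $O(C\epsilon^{-1})$ by the periodic-orbit comparison.
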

\begin{proof}
Assume $\xi\sim(1,\alpha)$ with $0<\alpha<1$ and $C>12$ without loss of generality. Define $\|x\|_1:=\min\limits_{m\in\mathbb{Z}}|x-m|$, consider $\{n\alpha\mod 1:1\leq n\leq \frac{3C}{\epsilon}\}$, by Pigeonhole Principle there exist $\frac{3C}{\epsilon}\geq n'>n''\geq 1$ such that
$$\|n'\alpha-n''\alpha\|_1\leq \frac{\epsilon}{2C}.$$
Let $n=n'-n''$, then $n\in [1,\frac{3C}{\epsilon}-1]$ and there exists $m\in\mathbb{Z}$ such that
$$|n\alpha-m|\leq \frac{\epsilon}{2C}.$$
We have $0\leq m\leq n$, assume ${\rm gcd}(n,m)=1$ without loss of generality. Now let $\eta=\frac{(n,m)}{\sqrt{n^2+m^2}}$, then
\begin{align*}
    \frac{\epsilon}{2C}&\geq |n\alpha-m|\\&=|(n,m)\times (1,\alpha)|\\&=L_\eta\sqrt{1+\alpha^2}|\sin(\arg\xi-\arg\eta)|\\&\geq \frac{2}{\pi}L_\eta|\arg\xi-\arg\eta|.
\end{align*}
So $|\arg\xi-\arg\eta|\leq \frac{\pi\epsilon}{4CL_\eta}$, which means that $\eta$ is not $\epsilon$-rational by condition \eqref{4.1} (i.e. $\frac{4\sqrt{2}}{\epsilon}\leq L_\eta\leq \frac{3\sqrt{2}C}{\epsilon}$).
The intersection of the closed trajectory $\gamma=\{(tn,tm):t\in [0,1]\}\subset\mathbb{T}^2$ with the circle $\{x_1=0\}$ is given by $\{(0,\frac{k}{n}):0\leq k<n\}$. Thus each ball of radius $r>\frac{1}{2n}$ has to intersect $\gamma$. Since $n\geq\frac{L_\eta}{\sqrt{2}}\geq \frac{4}{\epsilon}$, there exists $t\in [0,L_\eta]$, $x+t\eta\in B(0,\frac{\epsilon}{7})$ for any $x\in \mathbb{T}^2$.

Now
\begin{align*}
    |(x+t\xi)-(x+t\eta)|\leq L_\eta |\xi-\eta|\leq L_\eta |\arg\xi-\arg\eta|<\frac{\epsilon}{C}<\frac{\epsilon}{12}.
\end{align*}
So $x+t\xi\in B(0,\frac{\epsilon}{3})$, i.e. $C'=3\sqrt{2}\max(C,12)$ would work.
\end{proof}
In the following section, we will prove Theorem \ref{thm1} by considering rational and irrational directions. First we note that for $\psi\in C^\infty_0(\mathbb{R};[0,1])$ such that $\psi(x)=1$ on $[-K,K]$, we have
\begin{align*}
    \|(1-\psi)(-\Delta-h^{-2})u\|_{L^2(\mathbb{T}^2)}\leq \frac{1}{K}\|(-\Delta-h^{-2})u\|_{L^2(\mathbb{T}^2)}.
\end{align*}
So we only need to consider the case when the frequency is close to $h^{-1}$. We choose a cutoff function $a\in C^\infty_0(N_\epsilon;[0,1])$ such that $a=1$ on $N_{\frac{\epsilon}{2}}$ where $N_\epsilon=    \{1-\epsilon^3<|\xi|<1+\epsilon^3\}$ and $|\partial^\alpha a|\leq C_{\alpha}\epsilon^{-3|\alpha|}$. Furthermore, we make a partition of unity $$a(\xi)=a_{\rm irr}(\xi)+\sum\limits_{\eta} a_\eta(\xi)^2$$ requiring the following conditions, where the sum is over all $\epsilon$-rational directions $\eta$.
\begin{itemize}
    \item For any $\epsilon$-rational direction $\eta$, there exists $a_\eta$ such that $a_\eta^2=a$ on $\{\xi\in N_\epsilon:|\arg\xi-\arg \eta|<\frac{\epsilon}{25L_\eta}\}$ and $a_\eta=0$ outside $\{\xi\in N_\epsilon:|\arg\xi-\arg \eta|<\frac{\epsilon}{24L_\eta}\}$. In addition, $$\|a_\eta\|_{C^k}\leq C_k\max \left(\left(\frac{\epsilon}{L_\eta}\right)^k,\epsilon^{-3k}\right)\leq C_k\epsilon^{-3k}.$$
    These $a_\eta$'s are called rational. Their number is $O(\epsilon^{-2})$.
    \item Define $a_{\rm irr}:=a-\sum\limits_{\eta}a_\eta^2$ to be the irrational part. It also satisfies
    $$\|a_{\rm irr}\|_{C^k}\leq C_k\epsilon^{-3k}.$$
\end{itemize}
We claim that any two rational $a_\eta$ and $a_{\eta'}$ have disjoint support. If $\exists\xi\in\supp a_\eta\cap\supp a_{\eta'}$, assume $L_\eta\leq L_{\eta'}$ and $\eta\sim (1,\frac{q}{p})$ and $\eta'\sim(1,\frac{q'}{p'})$ such that $0<q< p$ and $0<q'<p'$ without loss of generality, then $$\frac{1}{pp'}\leq\left|\frac{q}{p}-\frac{q'}{p'}\right|< 2|\arg \eta-\arg\eta'|\leq\frac{2\epsilon}{12L_\eta}\leq \frac{\epsilon}{6p}.$$
Therefore, $\frac{1}{pp'}<\frac{\epsilon}{6p}$, which means that $p'>\frac{6}{\epsilon}$, contradictory to that $\eta'$ is $\epsilon$-rational. See Figure \ref{fig1} for a picture of the rational cutoff functions.
\begin{figure}[h]
\centering
\includegraphics[scale=0.25]{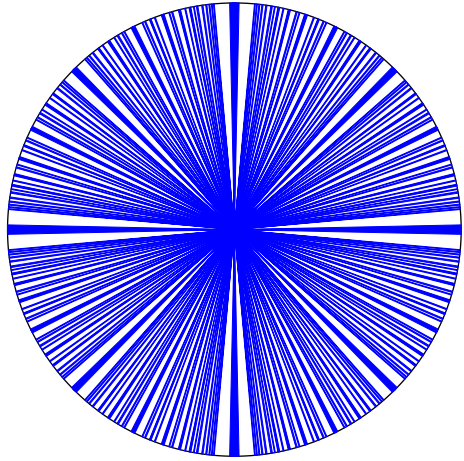}
\caption{The blue set contains the union of the supports of the rational cutoff functions $a_\eta$}\label{fig1}
\end{figure}
\section{Proof of semi-classical observability}
In this section we give the proof of Theorem \ref{thm1}. First we deal with the irrational case.
\begin{prop}
Let $\mathbb{T}^2=\mathbb{R}^2/\mathbb{Z}^2$ and $h=O(\epsilon^{8+\delta})$ for some $\delta>0$, then for $\epsilon\ll 1$, we have the following estimate
\begin{align}\label{5.1}
        \|\Op^w_h(a_{\rm irr})u\|_{L^2(\mathbb{T}^2)}^2\leq C\epsilon^{-2}\|u\|_{L^2(\Omega_\epsilon)}^2+C\epsilon^{-4}h^{-2}\|(-h^2\Delta-1)u\|_{L^2(\mathbb{T}^2)}^2+C\epsilon^{-8}h\|u\|_{L^2(\mathbb{T}^2)}^2.
\end{align}
\end{prop}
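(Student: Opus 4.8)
The plan is to deduce \eqref{5.1} from the quantitative propagation of singularities estimate (Proposition~\ref{prop8}), applied with $a=a_{\rm irr}$ and with $b$ a cutoff localizing to the control ball, after first showing that the geometric hypothesis of Proposition~\ref{prop8} is met using only $M=O(\epsilon^{-2})$ propagation times, each of size $O(\epsilon^{-1})$. The heart of the argument is then the bookkeeping: tracking the powers of $\epsilon$ hidden in the constants $C_{a,b,1},C_{a,b,2},C_{a,b,3}$, where the hypothesis $h=O(\epsilon^{8+\delta})$ is exactly what is needed.

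First I would pin down $\supp a_{\rm irr}$. Since the rational cutoffs have pairwise disjoint supports and $a_\eta^2=a$ on $\{\xi\in N_\epsilon:|\arg\xi-\arg\eta|<\epsilon/(25L_\eta)\}$, the function $a_{\rm irr}=a-\sum_\eta a_\eta^2$ vanishes on each of these sets; hence every direction $\xi\in\supp a_{\rm irr}$ satisfies $|\arg\xi-\arg\eta|\geq \epsilon/(25L_\eta)$ for every $\epsilon$-rational $\eta$, so Proposition~\ref{prop9} applies (with $C=25$) and furnishes a numerical constant $C'$ such that for every $x\in\mathbb{T}^2$ and every $\xi\in\supp a_{\rm irr}$ there is $s\in[0,C'\epsilon^{-1}]$ with $x+s\,\xi/|\xi|\in B(0,\epsilon/3)$. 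Next I would discretize: fix $b=b(x)\in C_0^\infty(B(0,\epsilon);[0,1])$ with $b\equiv1$ on $B(0,\epsilon/2)$ and $|\partial^\alpha b|\leq C_\alpha\epsilon^{-|\alpha|}$, and set $t_j=j\epsilon/20$ for $j=1,\dots,M$ with $M=\lceil 20C'\epsilon^{-2}\rceil=O(\epsilon^{-2})$, so $t:=t_M=O(\epsilon^{-1})$. For $|\xi|\in(1-\epsilon^3,1+\epsilon^3)$ the points $2t_j|\xi|$ are $(\epsilon/9)$-spaced and sweep an interval containing $[0,C'\epsilon^{-1}]$, so given $(x,\xi)\in\supp a_{\rm irr}$ and the time $s$ above we may choose $j$ with $|2t_j|\xi|-s|<\epsilon/9$; then $x+2t_j\xi$ lies within $\epsilon/3+\epsilon/9<\epsilon/2$ of $0$, i.e. $\phi_{t_j}(x,\xi)\in\{b=1\}$. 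This verifies the hypothesis of Proposition~\ref{prop8}.

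It then remains to apply Proposition~\ref{prop8} and estimate the constants. Since $b$ depends on $x$ only, $\Op^w_h(b)$ is multiplication by $b$, so $\|\Op^w_h(b)u\|_{L^2}^2=\|bu\|_{L^2}^2\leq\|u\|_{L^2(\Omega_\epsilon)}^2$. Using $\|a_{\rm irr}\|_{C^k}\leq C_k\epsilon^{-3k}$, $\|b\|_{C^k}\leq C_k\epsilon^{-k}$, $M=O(\epsilon^{-2})$, $t=O(\epsilon^{-1})$, together with $h=O(\epsilon^{8+\delta})$, one checks that each $h^{1/2}$ weight beats the worst per-derivative growth in the norms $\|\cdot\|_{C^{Kn}_h}$, $\|\cdot\|_{C^{Kn}_{h;M,t}}$, $\|\cdot\|_{C^{Kn}_{h;M,t;2}}$ (the worst such loss being $M\,t\,\|b\|_{C^1}\sim\epsilon^{-4}$ per extra derivative, killed precisely because $h\ll\epsilon^8$). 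This gives $\|b\|_{C^{Kn}_h}=O(1)$, $\|a_{\rm irr}\otimes b\|_{C^{Kn}_{h;M,t}}=O(1)$ and $\|a_{\rm irr}\otimes b\|_{C^{Kn}_{h;M,t;2}}=O(\epsilon^{-8})$, whence $C_{a,b,1}=O(M)=O(\epsilon^{-2})$, $C_{a,b,2}=O(M)=O(\epsilon^{-2})$, $C_{a,b,3}=O(\epsilon^{-8})$; combined with $|t|^2/h^2=O(\epsilon^{-2}h^{-2})$ this is exactly \eqref{5.1}.

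The main obstacle is this last step: verifying, uniformly over the finite range of multi-indices, that no term of the expansions defining $\|a_{\rm irr}\otimes b\|_{C^{Kn}_{h;M,t}}$ and $\|a_{\rm irr}\otimes b\|_{C^{Kn}_{h;M,t;2}}$ — which mix up to $Kn$ derivatives, $m$-fold products of propagated $b$-factors weighted by $M^m$, derivative losses $\epsilon^{-3}$ from $a_{\rm irr}$, and powers $t^{l_1+\cdots+l_m}$ from the flow — escapes the claimed $O(1)$, resp. $O(\epsilon^{-8})$, bound once $h=O(\epsilon^{8+\delta})$. Everything else — the geometric input from Proposition~\ref{prop9}, the reduction to $O(\epsilon^{-2})$ propagation times, and the localization $\|\Op^w_h(b)u\|_{L^2}\leq\|u\|_{L^2(\Omega_\epsilon)}$ — is routine.
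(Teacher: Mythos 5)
Your proposal is correct and follows essentially the same route as the paper: verify the hypothesis of Proposition~\ref{prop9} (with $C=25$) on $\supp a_{\rm irr}$, discretize into $M=O(\epsilon^{-2})$ times of size $t=O(\epsilon^{-1})$, apply Proposition~\ref{prop8} with a spatial cutoff supported in $\Omega_\epsilon$, and use $h=O(\epsilon^{8+\delta})$ to absorb the $h^{1/2}\epsilon^{-4}$ per-derivative losses, yielding the same constants $C_{1}=O(\epsilon^{-2})$, $C_{2}=O(\epsilon^{-2})$, $C_{3}=O(\epsilon^{-8})$ as in the paper.
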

\begin{proof}
We choose a cutoff function $\chi\in C^\infty_0(\Omega_\epsilon)$ with $\|\chi\|_{C^k}\leq C_k\epsilon^{-k}$ and $\chi=1$ on $B(0,\frac{2\epsilon}{3})$. On $\supp(a_{\rm irr})$ the assumption of Proposition \ref{prop9} is satisfied for $C=25$, so there exists $C'$ such that for any $p\in \supp a_{\rm irr}$, there exists $t\in [0,C'\epsilon^{-1}]$ such that $\phi_t(p)\in T^*B(0,\frac{\epsilon}{3})$. Let $t_j=j\frac{\epsilon}{12}$, $j=1,\cdots, M=\lceil 12C'\epsilon^{-2}\rceil$, then $\phi_{t_j}(p)\in T^*B(0,\frac{2\epsilon}{3})\subset\{\chi=1\}$ for some $j\in [1,M]$. By Proposition \ref{prop8} we have
\begin{align*}
    \|\Op^w_h(a_{\rm irr})u\|_{L^2}^2\leq C_{a_{\rm irr},\chi,1}\|\Op^w_h(\chi)u\|_{L^2}^2+C_{a_{\rm irr},\chi,2}\frac{|t|^2}{h^2}\|(-h^2\Delta-1)u\|_{L^2}^2+C_{a_{\rm irr},\chi,3} h\|u\|_{L^2}^2
\end{align*}
with $M=O(\epsilon^{-2})$ and $t=O(\epsilon^{-1})$, so
\begin{align*}
C_{a_{\rm irr},\chi,1}&\leq C\epsilon^{-2}(1+h^\frac{1}{2}\epsilon^{-4})^{2K},\\
C_{a_{\rm irr},\chi,2}&\leq C\epsilon^{-2}(1+h^\frac{1}{2}\epsilon^{-4})^{2K}(1+h^\frac{1}{2}\epsilon^{-1})^{4K},\\
C_{a_{\rm irr},\chi,3}&\leq C\epsilon^{-8}(1+h^\frac{1}{2}\epsilon^{-4})^{2K}(1+h^\frac{1}{2}\epsilon^{-1})^{2K}.
\end{align*}
Therefore, let $h=O(\epsilon^{8+\delta})$, then
\begin{align*}
        \|\Op^w_h(a_{\rm irr})u\|_{L^2(\mathbb{T}^2)}^2\leq C\epsilon^{-2}\|u\|_{L^2(\Omega_\epsilon)}^2+C\epsilon^{-4}h^{-2}\|(-h^2\Delta-1)u\|_{L^2(\mathbb{T}^2)}^2+C\epsilon^{-8}h\|u\|_{L^2(\mathbb{T}^2)}^2.
\end{align*}
\end{proof}
Then we deal with the rational case
\begin{prop}
Let $\mathbb{T}^2=\mathbb{R}^2/\mathbb{Z}^2$ and $h=O(\epsilon^{8+\delta})$ for some $\delta>0$, then for $\epsilon\ll 1$ and an $\epsilon$-rational direction $\eta$, we have the following estimate
\begin{align}\label{5.2}
\begin{split}
    \|\Op^w_h(a_\eta)u\|_{L^2(\mathbb{T}^2)}^2
    &\leq  C\epsilon^{-6}\|u\|_{L^2(\Omega_\epsilon)}^2+C\epsilon^{-4}h^{-4}\|\Op^w_h(a_\eta)(-h^2\Delta-1)u\|_{L^2(\mathbb{T}^2)}^2\\
    &+Ch^{-2}\epsilon^{-10}\|(-h^2\Delta-1)u\|_{L^2(\mathbb{T}^2)}^2+C\epsilon^{-14}h\|u\|_{L^2(\mathbb{T}^2)}^2.
\end{split}
\end{align}
\end{prop}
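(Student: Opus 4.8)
The plan is to reduce to the one–dimensional estimate \eqref{2.1} by a Fourier expansion along the closed geodesic in direction $\eta$, and then to replace the resulting observation on a thin tube around that geodesic by an observation on $\Omega_\epsilon$, using the quantitative propagation Proposition~\ref{prop8}.

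First I would normalise. Write $\eta\sim(p,q)$ with $\gcd(p,q)=1$ and $L_\eta=\sqrt{p^2+q^2}<4\sqrt2\,\epsilon^{-1}$, and conjugate by the rotation carrying $e_2$ to $\eta/L_\eta$. This replaces $\mathbb{T}^2$ by $\mathbb{R}^2/\Lambda$ with $\Lambda=L_\eta\mathbb{Z}e_2+\mathbb{Z}(L_\eta^{-1},w_2)$ for some $w_2\in[0,L_\eta)$, makes the primitive geodesic through $0$ in direction $\eta$ equal to $\gamma_0=\{x_1=0\}$, and turns $a_\eta$ into a symbol supported in $\{\,|\arg\xi-\tfrac\pi2|<\epsilon/(24L_\eta),\ 1-\epsilon^3<|\xi|<1+\epsilon^3\,\}$; the operator $-h^2\Delta$ and the ball $B(0,\epsilon)$ are unchanged, and $-h^2\Delta$ commutes with $\Op^w_h(a_\eta)$ since $a_\eta(\xi)$ is a Fourier multiplier. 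Put $v=\Op^w_h(a_\eta)u$ and $g=(-h^2\Delta-1)v=\Op^w_h(a_\eta)(-h^2\Delta-1)u$. Expanding $v=\sum_n V_n(x_1)e^{2\pi inx_2/L_\eta}$ and $g=\sum_n G_n(x_1)e^{2\pi inx_2/L_\eta}$ in $x_2$ on the circle of length $L_\eta$, the second generator of $\Lambda$ forces the Bloch condition $V_n(x_1+L_\eta^{-1})=e^{-2\pi inw_2/L_\eta}V_n(x_1)$ (so $|V_n|$ is $L_\eta^{-1}$–periodic), while $\partial_{x_2}^2$ acts as a scalar on each mode, so each $V_n$ satisfies \emph{exactly} $(-h^2\partial_{x_1}^2-z_n)V_n=G_n$ with $z_n=1-(2\pi nh/L_\eta)^2\in\mathbb{R}$. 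Since $-h^2\partial_x^2-z$ has no first–order term, the proof of \eqref{2.1} applies verbatim to Bloch–periodic $V_n$ (the boundary terms of Case~1 cancel by the matching phase; Case~2 is local near $0$). Rescaling the $x_1$–circle to $\mathbb{T}^1$ with semiclassical parameter $\tilde h=hL_\eta$, applying \eqref{2.1} with an observation interval of half–width $c_0\epsilon L_\eta<\tfrac12$ for a small numerical $c_0$ (possible since $L_\eta<4\sqrt2\,\epsilon^{-1}$), undoing the rescaling, using $L_\eta\ge1$, and summing over $n$ by Parseval together with $|V_n(x_1)|=|v^{(n)}(x)|$ should give
$$\|v\|_{L^2}^2\le C\epsilon^{-3}\|v\|_{L^2(S)}^2+Ch^{-4}\|\Op^w_h(a_\eta)(-h^2\Delta-1)u\|_{L^2}^2,\qquad S=\{|x_1|<c_0\epsilon\}.$$

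Next I would pass from the tube $S$ to the ball. Pick $\theta=\theta(x_1)$ smooth with $0\le\theta\le1$, $\theta\equiv1$ on $S$, $\supp\theta\subset\{|x_1|<2c_0\epsilon\}$ (an embedded tube around $\gamma_0$ since $4c_0\epsilon<L_\eta^{-1}$) and $\|\theta\|_{C^k}\lesssim\epsilon^{-k}$. Then $\|v\|_{L^2(S)}\le\|\theta v\|_{L^2}$, and Corollary~\ref{cor5} gives $\theta v=\Op^w_h(\theta a_\eta)u+Eu$ with $\|E\|\le Ch\epsilon^{-4}$ (the semiclassical tail stays bounded because $h\le\epsilon^{8+\delta}$), hence $\|Eu\|^2\le Ch^2\epsilon^{-8}\|u\|^2$. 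The symbol $\theta(x_1)a_\eta(\xi)\in C^\infty_0(T^*\mathbb{T}^2;[0,1])$ has $\|\theta a_\eta\|_{C^k}\lesssim\epsilon^{-3k}$, its $x$–support has transverse width $<2c_0\epsilon$, and the angular width $\lesssim\epsilon/L_\eta$ of $\supp a_\eta$ bounds the transverse drift of the geodesic flow by $<\epsilon/11$ over one period $t\in[0,L_\eta]$; so with $t_j=jc_0\epsilon/2$, $j=1,\dots,M=\lceil2L_\eta/(c_0\epsilon)\rceil=O(\epsilon^{-2})$, every $p\in\supp(\theta a_\eta)$ has $\phi_{t_j}(p)\in T^*B(0,2\epsilon/3)$ for some $j$. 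Applying Proposition~\ref{prop8} with $a=\theta a_\eta$, $b=\chi\in C^\infty_0(\Omega_\epsilon;[0,1])$ equal to $1$ on $B(0,2\epsilon/3)$ with $\|\chi\|_{C^k}\lesssim\epsilon^{-k}$, $t=O(\epsilon^{-1})$, $M=O(\epsilon^{-2})$ — whose constants $C_{a,b,i}$ are estimated exactly as in the irrational proposition, hence $\lesssim\epsilon^{-2},\epsilon^{-2},\epsilon^{-8}$ once $h\le\epsilon^{8+\delta}$ — and using $\|\Op^w_h(\chi)u\|=\|\chi u\|\le\|u\|_{L^2(\Omega_\epsilon)}$ yields
$$\|\Op^w_h(\theta a_\eta)u\|^2\le C\epsilon^{-2}\|u\|_{L^2(\Omega_\epsilon)}^2+C\epsilon^{-4}h^{-2}\|(-h^2\Delta-1)u\|^2+C\epsilon^{-8}h\|u\|^2.$$
Combining the last two displays and absorbing the residual $h^2\epsilon^{-11}\|u\|^2\le h\epsilon^{-14}\|u\|^2$ (again $h\le\epsilon^{8+\delta}$) gives \eqref{5.2}.

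The conceptual obstacle is that $\Op^w_h(a_\eta)u$ is genuinely \emph{not} controlled by $\Omega_\epsilon$ alone: a quasimode microlocalised at a rational direction can concentrate on a tube disjoint from $\Omega_\epsilon$, so only the $\epsilon$–tube around $\gamma_0$ controls it, and the discrepancy must be paid for by $\|(-h^2\Delta-1)u\|$ — this is exactly why the inhomogeneous $1$D estimate \eqref{2.1} with arbitrary spectral parameter $z$ is indispensable. The remaining work is bookkeeping: matching the angular width $\sim\epsilon/L_\eta$ of $\supp a_\eta$ to the tube width and to the ball radius so that $\phi_{t_j}$ really lands inside $B(0,2\epsilon/3)$; handling the lattice twist $w_2$, which only makes the $V_n$ Bloch–periodic; and tracking the Proposition~\ref{prop8} constants — all of which is afforded by the hypothesis $h=O(\epsilon^{8+\delta})$.
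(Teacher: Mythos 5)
Your proposal is correct, and it follows the paper's two-step strategy for the rational directions: a one-dimensional Helmholtz observability estimate transverse to the closed geodesic in direction $\eta$, followed by the commutator estimate of Corollary~\ref{cor5} and the quantitative propagation Proposition~\ref{prop8} to trade the thin tube around the geodesic for the ball $\Omega_\epsilon$, with the same constant bookkeeping under $h=O(\epsilon^{8+\delta})$. The implementation of the first step differs from the paper's: the paper unfolds $\mathbb{T}^2$ to the covering square torus $\tilde{\mathbb{T}}^2$ of side $L_\eta$ with axes along $\eta,\eta^\perp$, extends $u$ periodically, applies Proposition~\ref{prop1} there (picking up the factors $(L/\epsilon)^3$ and $L^4h^{-4}$), localizes to the tube with the cutoff $b_\eta$, and converts norms back at the end; you instead rotate coordinates and perform a Fourier decomposition along the geodesic directly on the original (now skew) torus, so the transverse modes $V_n$ live on a circle of circumference $L_\eta^{-1}$ and are only Bloch-periodic because of the lattice twist. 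Your route avoids the periodization and the $L^2$ norm-conversion factors (and in fact yields slightly sharper powers, $\epsilon^{-5}$ in place of $\epsilon^{-6}$, etc.), but it requires checking that the proof of \eqref{2.1} survives Bloch boundary conditions — which you correctly note it does, since the integration by parts in Case 1 is phase-invariant and Case 2 only uses the vanishing of $\chi v$ near $x=0$ — and it implicitly uses the invariance of the Weyl calculus under the rotation (or, more simply, one should perform the propagation step with $a=\theta a_\eta$ back on the standard $\mathbb{T}^2$, where $\theta$ is a legitimate smooth function, so that Corollary~\ref{cor5} and Proposition~\ref{prop8} apply as stated). The paper's unfolding keeps every function genuinely periodic so Proposition~\ref{prop1} applies verbatim, at the cost of the extra covering-torus bookkeeping; both arguments deliver \eqref{5.2}.
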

\begin{proof}
For $\epsilon$-rational direction $\eta\sim(a,b)\in\mathbb{Z}^2\setminus\{0\}$ (${\rm gcd}(a,b)=1$),
let $L=L_\eta$ be the new period and $a_\eta$ be the corresponding $\epsilon$-rational cutoff function ($\|a_\eta\|_{C^k}\leq C_k\epsilon^{-3k}$). Cover $\mathbb{T}^2$ with a larger square with edges in direction $\eta$ and $\eta^\perp$. The square has area $a^2+b^2$ and induces a torus $\tilde{\mathbb{T}}^2=\tilde{\mathbb{T}}_{\eta}^1\times\tilde{\mathbb{T}}_{\eta^\perp}^1$. We extend the function $u$ to the larger torus periodically.
\begin{figure}[ht]
    \centering
    \includegraphics[scale=3]{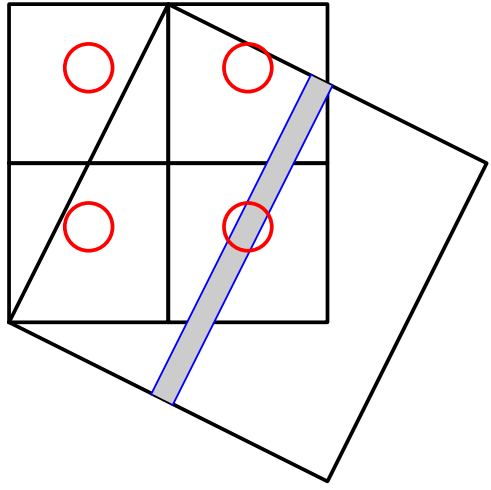}
    \caption{Cover the standard torus with a larger one. The cutoff function $b_\eta$ restricts the consideration into a small strip coming through the small ball.}\label{fig2}
\end{figure}
Let $b_\eta\in C^\infty(\tilde{\mathbb{T}}_{\eta^\perp}^1)$ such that ${\rm supp}b_\eta\subset (-\frac{\epsilon}{3},\frac{\epsilon}{3})$ and $b_\eta=1$ on $(-\frac{\epsilon}{4},\frac{\epsilon}{4})$ with $\|b_\eta\|_{C^k}\leq C_k\epsilon^{-k}$. See Figure \ref{fig2} for the covering and cutoff.

By Proposition \ref{prop1} we have
\begin{align*}
\|\Op^w_h(a_\eta)u||_{L^2(\tilde{\mathbb{T}}^2)}^2&\leq C\left(\frac{L}{\epsilon}\right)^3\|\Op^w_h(b_\eta)\Op^w_h(a_\eta)u\|_{L^2(\tilde{\mathbb{T}}^2)}^2\\
&+4L^4h^{-4}\|\Op^w_h(a_\eta)(-h^2\Delta-1)u\|_{L^2(\tilde{\mathbb{T}}^2)}^2.
\end{align*}
Moreover,
\begin{align*}
    &\|(\Op^w_h(a_\eta b_\eta)-\Op^w_h(b_\eta)\Op^w_h(a_\eta))u\|_{L^2(\tilde{\mathbb{T}}^2)}\\
    &\leq Ch\sum\limits_{|\alpha|\leq 2K}h^{\frac{|\alpha|}{2}}\|\partial^\alpha \sigma(D)(b_\eta\otimes a_\eta)\|_{L^\infty}\|u\|_{L^2(\tilde{\mathbb{T}}^2)}\\
    &\leq Ch\epsilon^{-4}(1+h^\frac{1}{2}\epsilon^{-3})^{2K}\|u\|_{L^2(\tilde{\mathbb{T}}^2)}.
\end{align*}
We then choose a cutoff function $\chi\in C^\infty_0(\Omega_\epsilon)$ with $\|\chi\|_{C^k}\leq C_k\epsilon^{-k}$ and $\chi=1$ on $B(0,\frac{2\epsilon}{3})$. Notice that for any $p\in \supp a_\eta b_\eta$, there exists $t=O(L)\leq C'\epsilon^{-1}$ such that $\phi_t(p)\in T^*B(0,\frac{9\epsilon}{24})$. Let $t_j=j\frac{\epsilon}{12}$, $j=1,\cdots, M=\lceil 12C'\epsilon^{-2}\rceil$, then $\phi_{t_j}(p)\in T^*B(0,\frac{2\epsilon}{3})\subset\{\chi=1\}$ for some $j\in [1,M]$. By Proposition \ref{prop8},
\begin{align*}
    \|\Op^w_h(a_\eta b_\eta)u\|_{L^2}^2&\leq C_{a_\eta b_\eta,\chi,1}\|\Op^w_h(\chi)u\|_{L^2}^2
    \\&+C_{a_\eta b_\eta,\chi,2}\frac{|t|^2}{h^2}\|(-h^2\Delta-1)u\|_{L^2}^2+C_{a_\eta b_\eta,\chi,3} h\|u\|_{L^2}^2
\end{align*}
with $M=O(\epsilon^{-2})$ and $t=O(\epsilon^{-1})$, so
\begin{align*}
C_{a_\eta b_\eta ,\chi,1}&\leq C\epsilon^{-2}(1+h^\frac{1}{2}\epsilon^{-4})^{2K},\\
C_{a_\eta b_\eta,\chi,2}&\leq C\epsilon^{-2}(1+h^\frac{1}{2}\epsilon^{-4})^{2K}(1+h^\frac{1}{2}\epsilon^{-1})^{4K},\\
C_{a_\eta b_\eta,\chi,3}&\leq C\epsilon^{-8}(1+h^\frac{1}{2}\epsilon^{-4})^{2K}(1+h^\frac{1}{2}\epsilon^{-1})^{2K}.
\end{align*}
So for $h=O(\epsilon^{8+\delta})$,
\begin{align*}
    \|\Op^w_h(a_\eta b_\eta)u\|_{L^2(\tilde{\mathbb{T}}^2)}^2&\leq C\epsilon^{-2}\|\Op^w_h(\chi)u\|_{L^2(\tilde{\mathbb{T}}^2)}^2\\
    &+C\epsilon^{-4}h^{-2}\|(-h^2\Delta-1)u\|_{L^2(\tilde{\mathbb{T}}^2)}^2+\epsilon^{-8}h\|u\|_{L^2(\tilde{\mathbb{T}}^2)}^2
\end{align*}
and
\begin{align*}
\|\Op^w_h(b_\eta)\Op^w_h(a_\eta )u\|_{L^2(\tilde{\mathbb{T}}^2)}^2 \leq 2\|\Op^w_h(a_\eta b_\eta)u\|_{L^2(\tilde{\mathbb{T}}^2)}^2 +\epsilon^{-8}h^2\|u\|_{L^2(\tilde{\mathbb{T}}^2)}^2.
\end{align*}
Put them together
\begin{align*}
    \|\Op^w_h(a_\eta )u||_{L^2(\tilde{\mathbb{T}}^2)}^2&\leq C\left(\frac{L}{\epsilon}\right)^3\|\Op^w_h(b_\eta)\Op^w_h(a_\eta)u\|_{L^2(\tilde{\mathbb{T}}^2)}^2\\
    &+\frac{4L^4}{h^{4}}\|\Op^w_h(a_\eta)(-h^2\Delta-1)u\|_{L^2(\tilde{\mathbb{T}}^2)}^2\\
    &\leq C\epsilon^{-5}L^3 \|u\|_{L^2(\Omega_\epsilon)}^2+C\frac{\epsilon^{-10}}{h^{2}}\|(-h^2\Delta-1)u\|_{L^2(\tilde{\mathbb{T}}^2)}^2\\
    &+\frac{C\epsilon^{-4}}{h^{4}}\|\Op^w_h(a_\eta)(-h^2\Delta-1)u\|_{L^2(\tilde{\mathbb{T}}^2)}^2+C\epsilon^{-14}h\|u\|_{L^2(\tilde{\mathbb{T}}^2)}^2.
\end{align*}
Therefore,
\begin{align*}
    \|\Op^w_h(a_\eta)u\|_{L^2(\mathbb{T}^2)}^2
    &\leq  C\epsilon^{-6}\|u\|_{L^2(\Omega_\epsilon)}^2+C\epsilon^{-4}h^{-4}\|\Op^w_h(a_\eta)(-h^2\Delta-1)u\|_{L^2(\mathbb{T}^2)}^2\\
    &+Ch^{-2}\epsilon^{-10}\|(-h^2\Delta-1)u\|_{L^2(\mathbb{T}^2)}^2+C\epsilon^{-14}h\|u\|_{L^2(\mathbb{T}^2)}^2.
\end{align*}
\end{proof}
We combine the irrational estimate \eqref{5.1} and rational estimate \eqref{5.2} to get
\begin{proof}[Proof of Theorem \ref{thm1}]
The number of $\epsilon$-rational directions is $O(\epsilon^{-2})$. For $h=O(\epsilon^{16+\delta})$ we have
\begin{align*}
    \|u\|_{L^2(\mathbb{T}^2)}^2
    &\leq \langle\Op^w_h(a)u,u\rangle+\langle\Op^w_h(1-a)u,u\rangle\\
    &\leq\|\Op^w_h(a_{\rm irr})u\|_{L^2(\mathbb{T}^2)}\|u\|_{L^2(\mathbb{T}^2)}+ \sum\limits_{\eta}\|\Op^w_h(a_\eta)u\|_{L^2(\mathbb{T}^2)}^2\\
    &+C\epsilon^{-6}\|(-h^{2}\Delta-1)u\|_{L^2(\mathbb{T}^2)}^2\\
    &\leq C(\|\Op^w_h(a_{\rm irr})u\|_{L^2(\mathbb{T}^2)}^2+ \sum\limits_{\eta}\|\Op^w_h(a_\eta)u\|_{L^2(\mathbb{T}^2)}^2+C\epsilon^{-6}\|(-h^{2}\Delta-1)u\|_{L^2(\mathbb{T}^2)}^2)\\
    &\leq C\epsilon^{-8}\|u\|_{L^2(\Omega_\epsilon)}^2+C\frac{\epsilon^{-4}}{h^4}\|(-h^2\Delta-1)u\|_{L^2(\mathbb{T}^2)}^2.
\end{align*}
This ends the proof of Theorem \ref{thm1}.
\end{proof}
\section{Classical observability}
In this section, we deduce the classical observability estimate from the semiclassical estimate on $\mathbb{T}^2=\mathbb{R}^2/\mathbb{Z}^2$. Let $\{\varphi_{\lambda,k}=e^{2\pi i(px+qy)}\}$ be eigenfunctions of $-\Delta$ with respect to eigenvalue $\lambda^2$ (i.e. $\lambda^2=4\pi^2(p^2+q^2)$) such that $\{\varphi_{\lambda,k}\}$ forms an orthonormal basis of $L^2(\mathbb{T}^2)$. Let $\Pi_N=\sum\limits_{\lambda= N}\langle u,\varphi_{\lambda,k}\rangle\varphi_{\lambda,k}$ and $\Pi_{\leq N }=\sum\limits_{\lambda\leq N}\Pi_\lambda$. Similarly, $\Pi_{> N }=\sum\limits_{\lambda> N}\Pi_\lambda$.
\subsection{High frequency estimate}
We first prove the high frequency estimate.
\begin{thm}\label{thm5}
Let $\Omega_\epsilon=B(0,\epsilon)\subset\mathbb{T}^2=\mathbb{R}^2/\mathbb{Z}^2$ and $\Pi=\Pi_{>h_0^{-1}}$ where $h_0=\epsilon^{16+\delta}$ for $\delta>0$ and $\epsilon\ll 1$, then we have
$$\|\Pi u_0\|_{L^2(\mathbb{T}^2)}^2\leq C\epsilon^{-8}\int_0^\frac{1}{2\pi}\|e^{it\Delta} \Pi u_0\|_{L^2(\Omega_\epsilon)}^2dt.$$
\end{thm}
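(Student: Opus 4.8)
The plan is to deduce Theorem \ref{thm5} from the semiclassical estimate of Theorem \ref{thm1} by a standard dyadic decomposition of the high-frequency part into frequency shells and a $TT^*$/almost-orthogonality argument, together with the conservation of $L^2$ norm by the Schr\"odinger flow. Write $v_0=\Pi u_0=\Pi_{>h_0^{-1}}u_0$, and decompose $v_0=\sum_{j\geq 0}w_j$ where $w_j=\Pi_{\sim 2^j h_0^{-1}}v_0$ is the projection onto the dyadic frequency shell $\{\lambda\sim 2^j h_0^{-1}\}$ (so $\lambda^2\in[4^j h_0^{-2},4^{j+1}h_0^{-2})$, say). On the $j$-th shell choose $h_j\sim 2^{-j}h_0$ so that $(-h_j^2\Delta-1)$ is essentially of size $O(1)$ relative to $h_j^{-2}$ — in fact $\|(-h_j^2\Delta-1)w_j\|_{L^2}\le C\|w_j\|_{L^2}$ with $C$ absolute — and then $h_j<h_0=\epsilon^{16+\delta}$ so Theorem \ref{thm1} applies to $w_j$ with this $h_j$. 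This is the core mechanism: the second, error term in Theorem \ref{thm1}, namely $C\epsilon^{-2}h_j^{-2}\|(-h_j^2\Delta-1)w_j\|_{L^2}$, is \emph{not} small by itself, so one cannot apply Theorem \ref{thm1} to the full frequency shell directly; instead one must localize \emph{also in time} and use that a short time-average of $e^{it\Delta}w_j$ moves the frequency around only slightly.

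The precise route is the classical one of Lebeau/Burq--Zworski--Jin: for $T$ small, apply Theorem \ref{thm1} not to $w_j$ but to $e^{it\Delta}w_j$, and observe that for a function microlocalized on the shell $\lambda\sim 2^j h_0^{-1}$ the rescaled operator $(-h_j^2\Delta-1)$ has eigenvalues $h_j^2\lambda^2-1$ that are $O(1)$ in absolute value but can be made \emph{small} after further subdividing the shell $\lambda\sim 2^j h_0^{-1}$ into $O(h_j^{-1/2})$-width sub-blocks; on each sub-block $\|(-h_j^2\Delta-1)w\|\le C h_j\|w\|$ or better. Then Theorem \ref{thm1} gives, on each such sub-block $w$,
\begin{align*}
\|w\|_{L^2(\mathbb{T}^2)}\le C\epsilon^{-4}\|w\|_{L^2(\Omega_\epsilon)}+C\epsilon^{-2}h_j^{-2}\cdot (\text{small})\|w\|_{L^2(\mathbb{T}^2)},
\end{align*}
and if the smallness beats $\epsilon^2 h_j^2$ the last term is absorbed into the left side, yielding $\|w\|_{L^2(\mathbb{T}^2)}^2\le C\epsilon^{-8}\|w\|_{L^2(\Omega_\epsilon)}^2$ for that sub-block. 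Summing over the $O(h_j^{-1/2})$ sub-blocks of shell $j$ and then over the dyadic shells $j\ge 0$, using almost-orthogonality of the spatial pieces (the blocks are spectrally disjoint, so the $L^2(\mathbb{T}^2)$ norms add exactly; on $\Omega_\epsilon$ one pays a Cauchy--Schwarz/number-of-blocks loss which is polynomial in $\epsilon^{-1}$ and absorbed into the $\epsilon^{-8}$), and finally integrating in $t\in[0,\tfrac1{2\pi}]$ using $\|e^{it\Delta}w\|_{L^2(\mathbb{T}^2)}=\|w\|_{L^2(\mathbb{T}^2)}$, one arrives at
\begin{align*}
\|\Pi u_0\|_{L^2(\mathbb{T}^2)}^2\le C\epsilon^{-8}\int_0^{\frac{1}{2\pi}}\|e^{it\Delta}\Pi u_0\|_{L^2(\Omega_\epsilon)}^2\,dt.
\end{align*}

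A cleaner variant, which I expect the paper follows, avoids the explicit sub-block bookkeeping: apply Theorem \ref{thm1} to $e^{it\Delta}\Pi u_0$ for each fixed $t$, integrate in $t$ over $[0,\tfrac1{2\pi}]$, and handle the error term $C\epsilon^{-2}h^{-2}\|(-h^2\Delta-1)e^{it\Delta}\Pi u_0\|$ by decomposing $\Pi u_0$ dyadically in frequency and on the $j$-th shell choosing $h=h_j\sim 2^{-j}h_0$; then $(-h_j^2\Delta-1)$ acting on that shell is bounded by an absolute constant, but the $h_j^{-2}=4^j h_0^{-2}$ prefactor is large, so one instead needs the \emph{integrated} (in time) smoothing: the key point is that $\int_0^{1/2\pi}\|(-h^2\Delta-1)e^{it\Delta}w\|^2\,dt$ is not improved by time-averaging for the \emph{free} equation, so one really does need the spectral sub-block refinement above where $(-h_j^2\Delta-1)$ is genuinely small.

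The main obstacle is precisely this absorption step: making the error term $C\epsilon^{-2}h^{-2}\|(-h^2\Delta-1)w\|$ smaller than $\tfrac12\|w\|_{L^2(\mathbb{T}^2)}$ requires $\|(-h^2\Delta-1)w\|\lesssim \epsilon^2 h^2\|w\|$, i.e.\ localizing the (rescaled) spectrum of $w$ into a window of width $\sim\epsilon^2 h^2$ around $1$, equivalently localizing $\lambda$ to within $\sim\epsilon^2 h$ of $h^{-1}$; this forces $O(\epsilon^{-2}h^{-1})$-many blocks per unit frequency, hence a loss of $\epsilon^{-2}h^{-1}$ somewhere, which must be shown to be absorbable into the stated $\epsilon^{-8}$ constant after the sum over shells telescopes (the number of relevant shells is $O(\log h_0^{-1})$, contributing only a logarithmic, hence negligible at the polynomial level, factor — this is where the $\exp\exp(\cdots/\log\log)$ of Theorem \ref{thm2} does \emph{not yet} appear; it enters only when one also incorporates the low-frequency part via a separate unique-continuation / Carleman argument, which is not part of Theorem \ref{thm5}). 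I would set up the dyadic-shell plus spectral-window decomposition carefully, track the polynomial powers of $\epsilon^{-1}$ and $h^{-1}$ through each application of Theorem \ref{thm1}, verify that the time integration over $[0,\tfrac1{2\pi}]$ loses only an absolute constant, and check that the almost-orthogonality on $\Omega_\epsilon$ costs at most a further polynomial power, arriving at the clean $C\epsilon^{-8}$ in the statement.
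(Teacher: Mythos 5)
Your plan stalls exactly at the step you yourself flag as the ``main obstacle,'' and the mechanisms you offer there do not close it. To absorb the error term of Theorem \ref{thm1} you need $\|(-h^2\Delta-1)w\|\le c\,\epsilon^{2}h^{2}\|w\|$; the sub-blocks of your second paragraph, on which you only claim $\|(-h_j^2\Delta-1)w\|\le Ch_j\|w\|$, are far too wide, since $h_j\gg \epsilon^2h_j^2$. Once you pass to the genuinely narrow windows ($\lambda$ within $O(\epsilon^2h)$ of $h^{-1}$), the number of windows per dyadic shell is of order $\epsilon^{-2}h_j^{-2}$, which is \emph{not} ``polynomial in $\epsilon^{-1}$'' --- it grows without bound as $h_j\to 0$ --- and the recombination step you propose fails structurally as well: spectral disjointness gives nothing on $\Omega_\epsilon$ at fixed $t$, and the inequality you would need, $\sum_b\|w_b\|_{L^2(\Omega_\epsilon)}^2\lesssim N\,\|w\|_{L^2(\Omega_\epsilon)}^2$, is false (different blocks can cancel on $\Omega_\epsilon$), so a fixed-time Cauchy--Schwarz with a block count cannot deliver it. The missing idea that rescues your route is arithmetic: on $\mathbb{T}^2$ the eigenvalues $\lambda^2\in 4\pi^2\mathbb{Z}$ are separated by $4\pi^2\gg\epsilon^2$, so each of your narrow windows contains at most one eigenvalue. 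Taking $h=\lambda^{-1}<h_0$ on the eigenspace $\Pi_\lambda u_0$ makes the error term vanish identically, so Theorem \ref{thm1} gives $\|\Pi_\lambda u_0\|\le C\epsilon^{-4}\|\Pi_\lambda u_0\|_{L^2(\Omega_\epsilon)}$, and the pieces recombine with \emph{no} loss because of exact orthogonality over the period,
\begin{equation*}
\int_0^{\frac{1}{2\pi}}\langle e^{it\Delta}\Pi_\lambda u_0,\,e^{it\Delta}\Pi_\mu u_0\rangle_{L^2(\Omega_\epsilon)}\,dt=0,\qquad \lambda\neq\mu,
\end{equation*}
which is precisely the device the paper uses for the low-frequency part. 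With that observation your decomposition collapses to an eigenspace-by-eigenspace argument that does prove Theorem \ref{thm5} with the stated $\epsilon^{-8}$; without it, the absorption-plus-summation scheme as written loses an unbounded factor and does not go through.

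For comparison, the paper's own proof is different and avoids spectral decomposition of $u_0$ altogether: following \cite{BBZ}, set $v=\chi(t/T)\,e^{it\Delta}u_0$, take the Fourier transform in $t$, and apply Theorem \ref{thm1} at each dual frequency $\tau>h_0^{-2}$ with $h^2=\tau^{-1}$; the inhomogeneity is then $\frac{i}{T}\widehat{\chi_T'u}$, whose factor $1/T$ (with $T\sim\epsilon^{-2}$) beats the $\epsilon^{-2}$ loss and allows the error term to be absorbed, after which Plancherel in $t$ and the $\frac{1}{2\pi}$-periodicity of $e^{it\Delta}$ reduce the time interval from $(0,T)$ to $(0,\frac{1}{2\pi})$ and yield the constant $C\epsilon^{-8}$.
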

\begin{proof}
Let $u_0=\Pi u_0$, $\chi\in C^\infty_0((0,1))$, $u=e^{it\Delta}u_0$ and $v=\chi_T(t)u$ where $\chi_T(t)=\chi(\frac{t}{T})$. We argue similarly to \cite[Proposition 3.1]{BBZ},
$$(i\partial_t-\Delta)v=\frac{i}{T}\chi_T'(t)u.$$
Take Fourier transform in $t$,
$$(-\tau-\Delta)\hat{v}=\frac{i}{T}\widehat{(\chi_T'(t)u)}.$$
For $\tau>h_0^{-2}$, apply Theorem \ref{thm1}, we have
$$\|\hat{v}\|_{L^2(\mathbb{T}^2)}\leq \frac{C\epsilon^{-2}}{ T}\|\widehat{(\chi_T'(t)u)}\|_{L^2(\mathbb{T}^2)}+C\epsilon^{-4}\|\hat{v}\|_{L^2(\Omega_\epsilon)}.$$
This is obviously true for $\tau\leq h_0^{-2}$. So
$$\|\chi_T(t)u\|_{L^2(\mathbb{T}^2\times \mathbb{R})}\leq \frac{C\epsilon^{-2}}{T}\|\chi_T'(t)u\|_{L^2(\mathbb{T}^2\times \mathbb{R})}+C\epsilon^{-4}\|\chi_T(t)u\|_{L^2(\Omega_\epsilon\times \mathbb{R})},$$
$$\|\chi\|\|u_0\|_{L^2(\mathbb{T}^2)}\leq \frac{C\epsilon^{-2}}{T}\|\chi'\|\|u_0\|_{L^2(\mathbb{T}^2)}+C\epsilon^{-4}\|\chi(t)u(tT,x)\|_{L^2(\Omega_\epsilon\times \mathbb{R})}.$$
So for appropriate $T=O(\epsilon^{-2})$, we have
$$\|u_0\|_{L^2(\mathbb{T}^2)}^2\leq C\frac{\epsilon^{-8}}{T}\|u\|_{L^2(\Omega_\epsilon\times (0,T))}^2.$$
Because $e^{it\Delta}u_0$ has period $\frac{1}{2\pi}$, we have
$$\frac{1}{T}\|u\|_{L^2(\Omega_\epsilon\times (0,T))}^2\leq C\int_0^\frac{1}{2\pi}\|e^{it\Delta} \Pi u_0\|_{L^2(\Omega_\epsilon)}^2dt.$$
This ends the proof.
\end{proof}
\subsection{Low frequency estimate} Then we estimate the low frequency part.
By \cite[Theorem 317]{HaWr}, we have $\rank \Pi_\lambda\leq e^{C\log\lambda/\log\log\lambda}$. Now we want to determine the constant in the following estimate
$$\|\Pi_\lambda u_0\|^2\leq C(\epsilon,\lambda)\|\Pi_\lambda u_0\|^2_{L^2(\Omega)}.$$
We use the following lemma
\begin{lem}[Nazarov-Tur\'{a}n lemma]\cite{Na}
Let $\mathbb{T}^1=\mathbb{R}/\mathbb{Z}$, $E\subset \mathbb{T}^1$ be a measurable subset, and $p(x)=\sum\limits_{k=1}^n c_ke^{2\pi i kx}$ be a trigonometric polynomial in $n$ characters, then exists numerical constant $C$ such that
$$\|p\|_{L^2(\mathbb{T}^1)}\leq \left(\frac{C}{|E|}\right)^{n-1}\|p\|_{L^2(E)}.$$
\end{lem}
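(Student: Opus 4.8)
The plan is to reduce the inequality to a Turán--Remez type estimate for an ordinary polynomial restricted to the unit circle and to extract the needed sublevel-set bound from Cartan's lemma; this is in substance Nazarov's argument. Since $|e^{2\pi i x}|=1$, writing $p(x)=e^{2\pi i x}q(e^{2\pi i x})$ with $q(z)=\sum_{j=0}^{n-1}c_{j+1}z^{j}$ a polynomial of degree $m\le n-1$, the statement amounts to a bound $\|q\|_{L^{2}(\mathbb{T}^1)}\le(C/|E|)^{\kappa}\|q\|_{L^{2}(E)}$ for $q$ alone, where $\|q\|_{L^2(S)}^2:=\int_S|q(e^{2\pi i x})|^2\,dx$. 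I would prove it with $\kappa=n-\tfrac12$ (equivalently, the squared version with exponent $2n-1$); this is the sharp exponent --- already for constant $q$ and $|E|<1$ the ratio equals $|E|^{-1/2}$ --- and it is what the applications require.

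First I would record the sublevel-set estimate. Factoring $q=c\prod_{j}(z-a_{j})$ and applying Cartan's lemma to the monic polynomial $\prod_{j}(z-a_{j})$, the set $\{z\in\mathbb{C}:\prod_{j}|z-a_{j}|<t\}$ is contained in finitely many disks of total radius $\le C_{0}\,t^{1/m}$ with $C_{0}$ absolute; since a disk of radius $\rho$ meets the unit circle in a set of measure $\lesssim\rho$,
$$\bigl|\{z\in\mathbb{T}^1:\ |q(z)|<t\}\bigr|\ \le\ C_{0}\,(t/|c|)^{1/m}.$$
A polynomial all of whose roots lie far from $\mathbb{T}^1$ is essentially constant there, which makes the normalization by $|c|$ useless, so I would split $q=c\,q_{\rm near}q_{\rm far}$ with $q_{\rm near}=\prod_{|a_{j}|\le3}(z-a_{j})$ of degree $\nu$ and $q_{\rm far}=\prod_{|a_{j}|>3}(z-a_{j})$. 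On $\mathbb{T}^1$ one has $|q_{\rm near}|\le4^{\nu}$ and $\max_{\mathbb{T}^1}|q_{\rm far}|\le2^{m-\nu}\min_{\mathbb{T}^1}|q_{\rm far}|$, so $q_{\rm far}$ contributes only a bounded distortion which cancels from every ratio below. Applying the displayed bound to $q_{\rm near}$ then gives: for every $\lambda<\Lambda:=|c|\,(\min_{\mathbb{T}^1}|q_{\rm far}|)\,(|E|/2C_{0})^{\nu}$ one has $|\{w\in\mathbb{T}^1:|q(w)|<\lambda\}|<|E|/2$.

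The $L^{2}$ bound then follows by truncation. Fixing $\lambda<\Lambda$ and setting $E_{\lambda}=\{w\in E:|q(w)|\ge\lambda\}$, the previous step gives $|E_{\lambda}|>|E|/2$, so $\int_{E}|q|^{2}\ge\lambda^{2}|E|/2$; letting $\lambda\uparrow\Lambda$ yields $\int_{E}|q|^{2}\ge\Lambda^{2}|E|/2$. On the other hand $\int_{\mathbb{T}^1}|q|^{2}\le\|q\|_{L^{\infty}(\mathbb{T}^1)}^{2}\le|c|^{2}\,16^{\nu}\,4^{m-\nu}\,(\min_{\mathbb{T}^1}|q_{\rm far}|)^{2}$. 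Dividing, the factors $|c|^{2}$ and $(\min_{\mathbb{T}^1}|q_{\rm far}|)^{2}$ cancel and, using $|E|\le1$, the remaining $\nu$-dependence collapses, leaving $\|q\|_{L^{2}(\mathbb{T}^1)}^{2}\le(C/|E|)^{2m+1}\|q\|_{L^{2}(E)}^{2}\le(C/|E|)^{2n-1}\|q\|_{L^{2}(E)}^{2}$ with $C$ absolute; taking square roots (and recalling $|p|=|q(e^{2\pi i x})|$) completes the proof.

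The step I expect to be the real point is the sublevel-set estimate with a constant independent of $n$: a crude union bound --- covering $\{\prod_{j}|z-a_{j}|<t\}$ by the single-root neighbourhoods $\{|z-a_{j}|<t^{1/m}\}$ --- yields only $C_{0}\,m\,t^{1/m}$, and the spurious factor $m$ propagates to a constant of size $m^{O(m)}$, which is not absorbable into $(C/|E|)^{O(m)}$ for small $|E|$; it is precisely Cartan's lemma (control of the overlaps of the exceptional disks) that removes it, and one could dispense with it only by allowing the constant to depend mildly on $n$. The routine points --- the elementary bound on $|\mathbb{T}^1\cap D(a,\rho)|$ and the degenerate cases $q\equiv0$, $\nu=0$, $|E|$ close to $1$ --- are immediate.
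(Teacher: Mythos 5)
The paper offers no proof of this lemma at all---it is quoted directly from Nazarov \cite{Na}---so yours is the only argument on the table, and as a proof of the consecutive-frequency statement it is essentially correct. The reduction $p(x)=e^{2\pi i x}q(e^{2\pi i x})$ with $\deg q\le n-1$, the Cartan-lemma sublevel bound $|\{x\in\mathbb{T}^1:|q(e^{2\pi ix})|<t\}|\le C_0(t/|c|)^{1/m}$, the near/far factorization that neutralizes a tiny leading coefficient, and the truncation step all check out, and your closing remark is accurate: a naive union bound over single-root neighbourhoods loses a factor $m$ that propagates to $m^{O(m)}$, and it is precisely Cartan's control of the total radius of the exceptional disks that removes it. One caveat on the exponent: you prove $n-\tfrac12$ rather than the stated $n-1$, and you are right that this is forced (already $n=1$ gives ratio $|E|^{-1/2}$, and roots clustered on a short arc give $(c/|E|)^{n-1}|E|^{-1/2}$), so the lemma as printed is slightly misstated and your version is the correct one; this is harmless for the paper, since the constant is absorbed into the double exponential of Theorem \ref{thm2}.

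The more substantive difference from the citation concerns generality. Your argument leans on the spectrum being exactly $\{1,\dots,n\}$, which caps the algebraic degree of $q$ at $n-1$. In the low-frequency corollary the lemma is applied, for fixed $y$, to $\sum_k c_ke^{2\pi i q_k y}e^{2\pi i p_k x}$ where the $p_k$ are first coordinates of lattice points on a circle of radius $\lambda/2\pi\le C\epsilon^{-17}$: an arbitrary set of at most $n$ integers, with $n\le e^{C\log\lambda/\log\log\lambda}$ but degree possibly of size $\epsilon^{-17}$. For such spectra the Cartan argument only yields an exponent comparable to the degree, which would replace $\exp\exp(C\log\epsilon^{-1}/\log\log\epsilon^{-1})$ by something like $\exp(C\epsilon^{-17}\log\epsilon^{-1})$; obtaining dependence on the number of terms alone is exactly the content of Nazarov's theorem and requires genuinely more than Cartan's lemma. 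So your proof is a valid elementary substitute for the lemma as literally stated, but it cannot replace the citation in the way the paper actually uses it.
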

\begin{corr} Let $\mathbb{T}^2=\mathbb{R}^2/\mathbb{Z}^2$ and $\lambda\leq h_0^{-1}=\epsilon^{-17}$, there exists constant $C$ such that
$$\|\Pi_\lambda u_0\|^2_{L^2(\mathbb{T}^2)}\leq e^{e^{C\log{\epsilon^{-1}}/\log\log{\epsilon^{-1}}}}\|\Pi_\lambda u_0\|^2_{L^2(\Omega_\epsilon)}.$$

\end{corr}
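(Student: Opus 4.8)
The goal is to upgrade the one-dimensional Nazarov–Turán lemma to a two-dimensional statement for the eigenspace $\Pi_\lambda L^2(\mathbb{T}^2)$, observed on the ball $\Omega_\epsilon$. I would proceed by fixing a direction — say the $y$-direction — and slicing the ball. A function $w=\Pi_\lambda u_0$ is a trigonometric polynomial $w(x,y)=\sum_{(p,q):\,p^2+q^2=\lambda^2/4\pi^2} c_{p,q}\,e^{2\pi i(px+qy)}$. For each fixed $x$, the function $y\mapsto w(x,y)$ is a trigonometric polynomial in at most $\rank\Pi_\lambda\le e^{C\log\lambda/\log\log\lambda}$ frequencies (in fact in the at-most-that-many distinct values of $q$). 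The ball $\Omega_\epsilon=B(0,\epsilon)$ contains a box $(-c\epsilon,c\epsilon)_x\times(-c\epsilon,c\epsilon)_y$, so for every $x\in(-c\epsilon,c\epsilon)$ the slice $E_x=\{y:(x,y)\in\Omega_\epsilon\}$ has measure $|E_x|\gtrsim\epsilon$. Applying the Nazarov–Turán lemma in $y$ on each such slice gives
\begin{align*}
\int_{-c\epsilon}^{c\epsilon}\|w(x,\cdot)\|_{L^2(\mathbb{T}^1_y)}^2\,dx\le \Big(\frac{C}{\epsilon}\Big)^{2(n-1)}\int_{-c\epsilon}^{c\epsilon}\int_{E_x}|w(x,y)|^2\,dy\,dx\le \Big(\frac{C}{\epsilon}\Big)^{2(n-1)}\|w\|_{L^2(\Omega_\epsilon)}^2,
\end{align*}
where $n=\rank\Pi_\lambda$.

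**Second slicing.** It remains to recover the full $L^2(\mathbb{T}^2)$ norm from $\int_{-c\epsilon}^{c\epsilon}\|w(x,\cdot)\|_{L^2(\mathbb{T}^1_y)}^2\,dx$. Here I would expand $w$ in the $y$-variable: $w(x,y)=\sum_q w_q(x)e^{2\pi i q y}$, where each $w_q(x)=\sum_{p:\,p^2+q^2=\lambda^2/4\pi^2}c_{p,q}e^{2\pi i p x}$ is again a trigonometric polynomial in at most $n$ characters in $x$. By orthogonality in $y$, $\|w(x,\cdot)\|_{L^2(\mathbb{T}^1_y)}^2=\sum_q|w_q(x)|^2$, so $\int_{-c\epsilon}^{c\epsilon}\|w(x,\cdot)\|_{L^2(\mathbb{T}^1_y)}^2\,dx=\sum_q\|w_q\|_{L^2((-c\epsilon,c\epsilon))}^2$. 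Applying the one-dimensional Nazarov–Turán lemma to each $w_q$ in the $x$-variable yields $\|w_q\|_{L^2(\mathbb{T}^1_x)}^2\le (C/\epsilon)^{2(n-1)}\|w_q\|_{L^2((-c\epsilon,c\epsilon))}^2$, and summing over $q$ and using orthogonality again gives $\|w\|_{L^2(\mathbb{T}^2)}^2\le (C/\epsilon)^{2(n-1)}\int_{-c\epsilon}^{c\epsilon}\|w(x,\cdot)\|_{L^2(\mathbb{T}^1_y)}^2\,dx$. Chaining the two estimates produces
\begin{align*}
\|\Pi_\lambda u_0\|_{L^2(\mathbb{T}^2)}^2\le \Big(\frac{C}{\epsilon}\Big)^{4(n-1)}\|\Pi_\lambda u_0\|_{L^2(\Omega_\epsilon)}^2.
\end{align*}

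**Final bookkeeping.** Now insert $n=\rank\Pi_\lambda\le e^{C\log\lambda/\log\log\lambda}$ and $\lambda\le h_0^{-1}=\epsilon^{-(16+\delta)}$, which here is written $\epsilon^{-17}$. Then $\log\lambda\le C\log\epsilon^{-1}$ and $\log\log\lambda\ge c\log\log\epsilon^{-1}$ (for $\epsilon$ small), so $n\le e^{C\log\epsilon^{-1}/\log\log\epsilon^{-1}}$. Finally $(C/\epsilon)^{4(n-1)}=e^{4(n-1)\log(C/\epsilon)}\le e^{C'n\log\epsilon^{-1}}\le e^{\,e^{C\log\epsilon^{-1}/\log\log\epsilon^{-1}}}$, absorbing the extra polynomial-in-$\log\epsilon^{-1}$ factor into the doubly-exponential constant by slightly enlarging $C$. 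This is exactly the claimed bound.

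**Main obstacle.** The analytic content is entirely packaged in the Nazarov–Turán lemma, so there is no real difficulty in the estimation; the only point requiring care is the bookkeeping on the number of frequencies. One must be sure that slicing does not increase the number of characters — this is automatic since the frequency set of each slice $w_q(x)$ or $w(x,\cdot)$ is a subset of the projection of $\{(p,q):p^2+q^2=\lambda^2/4\pi^2\}$ onto one coordinate, whose size is bounded by the total number of lattice points on the circle, i.e. by $\rank\Pi_\lambda$. A second mild subtlety is that the frequencies are genuine integers (so Nazarov–Turán on $\mathbb{T}^1=\mathbb{R}/\mathbb{Z}$ applies directly) and that $\Omega_\epsilon$ contains an axis-aligned box of side comparable to $\epsilon$, which is immediate for a Euclidean ball centered at the origin on the standard torus. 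No geometry beyond this is needed.
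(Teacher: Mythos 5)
Your proposal is correct and follows essentially the same route as the paper: two successive applications of the one-dimensional Nazarov--Tur\'an lemma, one in each coordinate direction (the paper fixes a variable and inscribes the square $[-\frac{\epsilon}{2},\frac{\epsilon}{2}]^2\subset\Omega_\epsilon$, while you slice the ball and use Fourier orthogonality in $y$ -- a cosmetic difference), combined with the lattice-point bound $\rank\Pi_\lambda\leq e^{C\log\lambda/\log\log\lambda}$ and $\lambda\leq\epsilon^{-17}$ to absorb $(C/\epsilon)^{4n}$ into the doubly exponential constant. The only minor caveat is your line ``$\log\log\lambda\geq c\log\log\epsilon^{-1}$,'' which fails for bounded $\lambda$, but there $n$ is bounded and the conclusion is trivial, so this is harmless.
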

\begin{proof}
Let $u_0=\Pi_\lambda u_0=\sum\limits_{k=1}^n c_k\varphi_{\lambda,k}$ where $n\leq \rank\Pi_\lambda\leq e^{C\log\lambda/\log\log\lambda}$.
We first fix $y$ and apply Nazarov-Tur\'{a}n lemma to get
\begin{align*}
    \|u_0\|^2_{L^2(\mathbb{T}^1_x\times \{y\})}\leq (C\epsilon^{-1})^{2n}\|u_0\|^2_{L^2([-\frac{\epsilon}{2},\frac{\epsilon}{2}]\times \{y\})}.
\end{align*}
By integrating it on $y$, we get
\begin{align*}
    \|u_0\|^2_{L^2(\mathbb{T}^2)}\leq (C\epsilon^{-1})^{2n}\|u_0\|^2_{L^2([-\frac{\epsilon}{2},\frac{\epsilon}{2}]\times \mathbb{T}^1_y)}.
\end{align*}
Similarly, we have
\begin{align*}
    \|u_0\|^2_{L^2([-\frac{\epsilon}{2},\frac{\epsilon}{2}]\times \mathbb{T}^1_y)}\leq (C\epsilon^{-1})^{2n}\|u_0\|^2_{L^2([-\frac{\epsilon}{2},\frac{\epsilon}{2}]\times [-\frac{\epsilon}{2},\frac{\epsilon}{2}])}.
\end{align*}
So in conclusion, we get
\begin{align*}
    \|u_0\|^2_{L^2(\mathbb{T}^2)}&\leq (C\epsilon^{-1})^{4n}\|u_0\|^2_{L^2([-\frac{\epsilon}{2},\frac{\epsilon}{2}]^2)}\\
    &\leq (C\epsilon^{-1})^{4n}\|u_0\|^2_{L^2(\Omega_\epsilon)}\\
    &\leq e^{e^{C\log\epsilon^{-17}/\log\log\epsilon^{-17}+\log\log (C\epsilon^{-1})}}\|u_0\|^2_{L^2(\Omega_\epsilon)}\\
    &\leq e^{e^{C\log\epsilon^{-1}/\log\log\epsilon^{-1}}}\|u_0\|^2_{L^2(\Omega_\epsilon)}.
\end{align*}
\end{proof}
Now for any eigenvalues $\lambda\neq \mu\in \Spec(\sqrt{-\Delta})$,
$$\int_0^\frac{1}{2\pi} \langle e^{it\Delta}\Pi_\lambda u_0, e^{it\Delta}\Pi_\mu u_0\rangle_{L^2(\Omega_\epsilon)} dt=0.$$
So we get
\begin{align*}
    \|\Pi_{\leq h_0^{-1}} u_0\|^2_{L^2(\mathbb{T}^2)}\leq e^{e^{C\log{\epsilon^{-1}}/\log\log{\epsilon^{-1}}}}\int_0^\frac{1}{2\pi}\|e^{it\Delta}\Pi_{\leq h_0^{-1}} u_0\|^2_{L^2(\Omega_\epsilon)}dt.
\end{align*}
Combine this with Theorem \ref{thm5} we get
\begin{align*}
    \|u_0\|^2_{L^2(\mathbb{T}^2)}\leq e^{e^{C\log{\epsilon^{-1}}/\log\log{\epsilon^{-1}}}}\int_0^\frac{1}{2\pi}\|e^{it\Delta}u_0\|_{L^2(\Omega_\epsilon)}^2dt.
\end{align*}
This ends the proof of Theorem \ref{thm2}.

\end{document}